\newtheorem{theorem}{Theorem}
\theoremstyle{plain}
\newtheorem{corollary}{Corollary}
\newtheorem{definition}{Definition}
\newtheorem{example}{Example}
\newtheorem{lemma}{Lemma}
\numberwithin{equation}{section}
\begin{document}
\title{Characterizations of intra-regular $\Gamma $-AG$^{\ast \ast }$%
-groupoids by the properties of their $\Gamma $-ideals}
\author{$^{1}$\textbf{Madad Khan, }$^{2}$\textbf{Venus Amjid} \textbf{and }$%
^{3}$\textbf{Faisal }}
\subjclass[2000]{20M10 and 20N99}
\maketitle

\begin{center}
\textbf{Department of Mathematics}

\textbf{COMSATS Institute of Information Technology}

\textbf{Abbottabad, Pakistan.}\bigskip

$^{1}$\textbf{E-mail: madadmath@yahoo.com}

$^{2}$\textbf{E-mail: venusmath@yahoo.com}

$^{3}$\textbf{E-mail: yousafzaimath@yahoo.com}

\bigskip
\end{center}

\textbf{Abstract.} We have characterized an intra-regular $\Gamma $-AG$%
^{\ast \ast }$-groupoids by using the properties of $\Gamma $-ideals (left,
right, two-sided ), $\Gamma $-interior, $\Gamma $-quasi, $\Gamma $-bi and $%
\Gamma $-generalized bi and $\Gamma $-$(1,2))$. We have prove that all the $%
\Gamma $-ideals coincides in an intra-regular $\Gamma $-AG$^{\ast \ast }$%
-groupoids. It has been examined that all the $\Gamma $-ideals of an
intra-regular $\Gamma $-AG$^{\ast \ast }$-groupoids are $\Gamma $%
-idempotent. In this paper we define all $\Gamma $-ideals in $\Gamma $-AG$%
^{\ast \ast }$-groupoids and we generalize some results.

\textbf{Keywords}. $\Gamma $-AG-groupoid, intra-regular $\Gamma $-AG$^{\ast
\ast }$-groupoid and $\Gamma $-$(1,2)$-ideals.

\begin{center}
\bigskip

{\LARGE Introduction}
\end{center}

The idea of generalization of commutative semigroup was introduced in $1972$%
, they named it as left almost semigroup (LA-semigroup in short)$($see \cite%
{kaz}$)$. It is also called an Abel-Grassmann's groupoid (AG-groupoid in
short) \cite{ref10}. In this paper we will call it an AG-groupoid.

This structure is closely related with a commutative semigroup because if an
AG-groupoid contains a right identity, then it becomes a commutative monoid 
\cite{Mus3}. A left identity in an AG-groupoid is unique \cite{Mus3}. It is
a mid structure between a groupoid and a commutative semigroup with wide
range of applications in theory of flocks \cite{Naseeruddin}. Ideals in
AG-groupoids have been discussed in \cite{Mus3}, \cite{O.Steinfeld}, \cite%
{mkhan} and \cite{myousaf}. In 1981. the notion of $\Gamma $-semigroups was
introduced by M. K. Sen \cite{gemma1} and \cite{gamma2}

In this paper, we have introduced the notion of $\Gamma $-AG$^{\ast \ast }$%
-groupoids. $\Gamma $-AG-groupoids is the generalization of $\Gamma $%
-AG-groupoids. Here, we explore all basic $\Gamma $-ideals, which includes $%
\Gamma $-ideals (left, right,two-sided ), $\Gamma $-interior, $\Gamma $%
-quasi, $\Gamma $-bi, $\Gamma $-generalized bi and $\Gamma $-$(1,2)).$

\begin{definition}
Let $S$ and $\Gamma $ be two non-empty sets, then $S$ is said to be a $%
\Gamma $-AG-groupoid if there exist a mapping $S\times \Gamma \times
S\rightarrow S$, written $\left( x\text{, }\gamma \text{, }y\right) $ as $%
x\gamma y$, such that $S$ satisfies the left invertive law, that is%
\begin{equation}
\left( x\gamma y\right) \delta z=\left( z\gamma y\right) \delta x,\text{ for
all }x,y,z\in S\text{ and }\gamma ,\delta \in \Gamma .  \tag{1}
\end{equation}
\end{definition}

\begin{definition}
\bigskip \ A $\Gamma $-AG-groupoid $S$ is called a $\Gamma $-medial if it
satisfies the medial law, that is 
\begin{equation}
\left( x\alpha y\right) \beta \left( l\gamma m\right) =\left( x\alpha
l\right) \beta \left( y\gamma m\right) ,\text{ for all }x,y,l,m\in S\text{
and }\alpha ,\beta ,\gamma \in \Gamma  \tag{2}
\end{equation}
\end{definition}

\begin{definition}
A $\Gamma $-AG-groupoid $S$ is called a $\Gamma $-AG$^{\ast \ast }$-groupoid
if it satisfy the following law%
\begin{equation}
a\alpha (b\beta c)=b\alpha (a\beta c),\text{ for all }a,b,c\in S\text{ and }%
\alpha ,\beta \in \Gamma .  \tag{3}
\end{equation}
\end{definition}

\begin{definition}
A $\Gamma $-AG-groupoid$^{\ast \ast }$ $S$ is called a $\Gamma $-paramedial
if it satisfies the paramedial law, that is 
\begin{equation}
\left( x\alpha y\right) \beta \left( l\gamma m\right) =\left( m\alpha
l\right) \beta \left( y\gamma x\right) ,\text{ for all }x,y,l,m\in S\text{
and }\alpha ,\beta ,\gamma \in \Gamma .  \tag{4}
\end{equation}
\end{definition}

\begin{definition}
\label{1 copy(1)}Let $S$ be a $\Gamma $-AG-groupoid, a non-empty subset $A$
of $S$ is called $\Gamma $-AG-subgroupoid if $a\gamma b\in A$ for all $a$, $%
b\in A$ and $\gamma \in \Gamma $ or $A$ is called $\Gamma $-AG-subgroupoid if%
$\ A\Gamma A\subseteq A.$
\end{definition}

\begin{definition}
A subset $A$ of a $\Gamma $-AG-groupoid $S$ is called left(right) $\Gamma $%
-ideal of $S$ if $S\Gamma A\subseteq A\left( A\Gamma S\subseteq A\right) $
and $A$ is called $\Gamma $-ideal of $S$ if it is both left and right $%
\Gamma $-ideal.
\end{definition}

\begin{definition}
A $\Gamma $-AG-subgroupoid $A$ of a $\Gamma $-AG-groupoid $S$ is called a $%
\Gamma $-bi-ideal of $S$ if $\left( A\Gamma S\right) \Gamma A\subseteq A$.
\end{definition}

\begin{definition}
A $\Gamma $-AG-subgroupoid $A$ of a $\Gamma $-AG-groupoid $S$ is called a $%
\Gamma $-interior ideal of $S$ if $\left( S\Gamma A\right) \Gamma S\subseteq
A.$
\end{definition}

\begin{definition}
A $\Gamma $-AG-groupoid $A$ of a $\Gamma $-AG-groupoid $S$ is called a $%
\Gamma $-quasi-ideal of $S$ if $S\Gamma A\cap A\Gamma S\subseteq A.$
\end{definition}

\begin{definition}
A $\Gamma $-AG-subgroupoid $A$ of a $\Gamma $-AG-groupoid $S$ is called a $%
\Gamma $-$(1,2)$-ideal of $S$ if $\left( A\Gamma S\right) \Gamma
A^{2}\subseteq A$.
\end{definition}

\begin{definition}
A $\Gamma $-ideal $P$ of a $\Gamma $-AG-groupoid $S$ is called $\Gamma $%
-prime$\left( \Gamma \text{-semiprime}\right) $ if for any $\Gamma $-ideals $%
A$ and $B$ of $S$, $A\Gamma B\subseteq P\left( A\Gamma A\subseteq P\right) $
implies either $A\subseteq P$ or $B\subseteq P\left( A\subseteq P\right) .$
\end{definition}

\begin{definition}
An element $a$ of an $\Gamma $-AG-groupoid $S$ is called an intra-regular if
there exists $x,y\in S$ and $\beta ,\gamma ,\delta \in \Gamma $ such that $%
a=(x\beta (a\delta a))\gamma y$ and $S$ is called an intra-regular $\Gamma $%
-AG-groupoid $S$, if every element of $S$ is an intra-regular.
\end{definition}

\begin{example}
Let $S$ and $\Gamma $ be two non-empty sets, then $S$ is said to be a $%
\Gamma $-AG-groupoid if there exist a mapping $S\times \Gamma \times
S\rightarrow S$, written $\left( x\text{, }\gamma \text{, }y\right) $ as $%
x\gamma y$, such that $S=S=\{a,b,c,d,e\}$
\end{example}

\begin{center}
\begin{tabular}{l|lllll}
. & $a$ & $b$ & $c$ & $d$ & $e$ \\ \hline
$a$ & $a$ & $a$ & $a$ & $a$ & $a$ \\ 
$b$ & $a$ & $b$ & $c$ & $d$ & $e$ \\ 
$c$ & $a$ & $e$ & $b$ & $c$ & $d$ \\ 
$d$ & $a$ & $d$ & $e$ & $b$ & $c$ \\ 
$e$ & $a$ & $c$ & $d$ & $e$ & $b$%
\end{tabular}
\end{center}

Clearly $S$ is an intra-regular because, $a=(a\beta a^{2})\gamma a,$ $%
b=(c\beta b^{2})\gamma e,$ $c=(d\beta c^{2})\gamma e,$ $d=(c\beta
d^{2})\gamma c,$ $e=(b\beta e^{2})\gamma e.$

Note that in a $\Gamma $-AG-groupoid $S$ with left identity, $S=S\Gamma S.$

\begin{theorem}
\label{ji}A $\Gamma $-AG$^{\ast \ast }$-groupoid $S$ is an intra-regular $%
\Gamma $-AG$^{\ast \ast }$-groupoid if $S\Gamma a=S$ or $a\Gamma S=S$ holds
for all $a$ $\in S$.
\end{theorem}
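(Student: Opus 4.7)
The plan is to split on which of the two hypotheses is assumed and to reduce the case $a\Gamma S=S$ to the case $S\Gamma a=S$, so that a single calculation handles both.

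First I would observe that the hypothesis $a\Gamma S=S$ for every $a\in S$ already forces $S\Gamma a=S$ for every $a\in S$. Indeed, given $a,b\in S$, write $b=a\gamma_1 s_1$ and $a=a\gamma_2 s_2$ using $a\Gamma S=S$, then substitute and apply the left invertive law $(1)$ to obtain $b=(a\gamma_2 s_2)\gamma_1 s_1=(s_1\gamma_2 s_2)\gamma_1 a\in S\Gamma a$. So without loss of generality we may assume $S\Gamma a=S$ for every $a\in S$.

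Under this assumption I would fix $a\in S$ and apply the hypothesis twice to produce factorizations $a=p\gamma_1 a$ with $p\in S$, and then $p=q\gamma_2 a$ with $q\in S$. Substituting and using the left invertive law once yields $a=(q\gamma_2 a)\gamma_1 a=(a\gamma_2 a)\gamma_1 q$. The remaining task is to rewrite the left factor $a\gamma_2 a$ in the shape $x\beta(a\delta a)$. The key move is to re-use $a=p\gamma_1 a$ to expand the \emph{second} $a$ inside $a\gamma_2 a$, giving $a\gamma_2 a=a\gamma_2(p\gamma_1 a)$, and then to apply the AG$^{\ast\ast}$ law $(3)$ to commute $p$ past the outer $a$: $a\gamma_2(p\gamma_1 a)=p\gamma_2(a\gamma_1 a)$. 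Plugging back gives
\[
a=(p\gamma_2(a\gamma_1 a))\gamma_1 q,
\]
which is exactly the intra-regular form, with $x=p$, $y=q$ and the $\Gamma$-slots chosen in the obvious way.

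The main obstacle I anticipate is this step of tucking the square $a\gamma_2 a$ inside a product so that it appears as the right operand of an inner product: without a left identity in $S$ one cannot simply prepend an extra factor to a product. The AG$^{\ast\ast}$ law $(3)$ is tailored precisely to permute the head of a right-nested product with the outer operand, and coupling it with the factorization of $a$ coming from $S\Gamma a=S$ is what makes the insertion possible. Everything else is a routine application of the left invertive law.
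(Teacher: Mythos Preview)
Your proof is correct. The reduction of the hypothesis $a\Gamma S=S$ to $S\Gamma a=S$ via the left invertive law is exactly what the paper does; where you diverge is in the main case.

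The paper argues at the level of subsets: from $S\Gamma a=S$ it writes
\[
a\in S=(S\Gamma S)\Gamma S=\bigl((S\Gamma a)\Gamma(S\Gamma a)\bigr)\Gamma S=\bigl((S\Gamma S)\Gamma(a\Gamma a)\bigr)\Gamma S\subseteq (S\Gamma a^{2})\Gamma S,
\]
using only the medial law $(2)$. You instead work element-wise, producing explicit witnesses $p,q$ and then massaging $a$ into the form $(p\gamma_2(a\gamma_1 a))\gamma_1 q$ by combining the left invertive law $(1)$ with the AG$^{\ast\ast}$ identity $(3)$. Both arguments are short and valid; the paper's route is a touch slicker because a single application of mediality suffices and no explicit tracking of $\Gamma$-slots or witnesses is needed, whereas your argument has the advantage of producing the concrete $x,y$ and $\beta,\gamma,\delta$ required by the definition and of actually using the AG$^{\ast\ast}$ hypothesis (the paper's computation for this case only needs the medial law, which holds in any $\Gamma$-AG-groupoid).
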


\begin{proof}
Let $S$ be a $\Gamma $-AG$^{\ast \ast }$-groupoid such that $S\Gamma a=S$
holds for all $a\in S,$ then $S=S\Gamma S$. Let $a\in S$ and therefore, by
using $(2),$ we have%
\begin{eqnarray*}
a &\in &S=(S\Gamma S)\Gamma S=((S\Gamma a)\Gamma (S\Gamma a))\Gamma
S=((S\Gamma S)\Gamma (a\Gamma a))\Gamma S \\
&\subseteq &(S\Gamma a^{2})\Gamma S.
\end{eqnarray*}

Which shows that $S$ is an intra-regular $\Gamma $-AG$^{\ast \ast }$%
-groupoid.

Let $a\in S$ and assume that $a\Gamma S=S$ holds for all $a\in S,$ then by
using $(1)$, we have%
\begin{equation*}
a\in S=S\Gamma S=(a\Gamma S)\Gamma S=(S\Gamma S)\Gamma a=S\Gamma a.
\end{equation*}%
Thus $S\Gamma a=S$ holds for all $a$ $\in S$ and therefore it follows from
above that $S$ is an intra-regular.
\end{proof}

\begin{corollary}
If $S$ is a $\Gamma $-AG$^{\ast \ast }$-groupoid such that $a\Gamma S=S$
holds for all $a$ $\in S,$ then $S\Gamma a=S$ holds for all $a$ $\in S.$
\end{corollary}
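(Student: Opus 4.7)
The plan is to reuse the very chain of equalities that appeared in the second half of the proof of Theorem \ref{ji}. That argument already established, under the hypothesis $a\Gamma S=S$ for all $a\in S$, the identity $S=S\Gamma a$; the corollary is just this conclusion stated on its own.

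Concretely, I would proceed as follows. First, observe that $a\Gamma S=S$ for every $a\in S$ forces $S=S\Gamma S$, since $S=\bigcup_{a\in S}(a\Gamma S)\subseteq S\Gamma S\subseteq S$. Then, fixing an arbitrary $a\in S$, substitute $S=a\Gamma S$ into one of the factors of $S\Gamma S$ to obtain
\begin{equation*}
S=S\Gamma S=(a\Gamma S)\Gamma S.
\end{equation*}
Now invoke the left invertive law (1): for any $x\in S$ and $s,t\in \Gamma$-expressions, $(a\,\gamma\,s)\,\delta\,t=(t\,\gamma\,s)\,\delta\,a$, which lifted to subsets gives $(a\Gamma S)\Gamma S=(S\Gamma S)\Gamma a$. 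Combined with $S=S\Gamma S$ this yields $S=(S\Gamma S)\Gamma a=S\Gamma a$. Since trivially $S\Gamma a\subseteq S$, we conclude $S\Gamma a=S$, as required.

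The argument is essentially a one-line application of the left invertive law, so no real obstacle is expected; the only thing worth being careful about is that passing from the pointwise identity (1) to the set identity $(a\Gamma S)\Gamma S=(S\Gamma S)\Gamma a$ is legitimate, which it is because both sides are unions over the same parameters. Thus the corollary follows immediately from the second half of the proof of Theorem \ref{ji}, and one could even replace its proof by a single sentence citing that passage.
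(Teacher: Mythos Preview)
Your proposal is correct and is essentially identical to the paper's own treatment: the corollary is stated without a separate proof, since the chain $S=S\Gamma S=(a\Gamma S)\Gamma S=(S\Gamma S)\Gamma a=S\Gamma a$ via the left invertive law~(1) already appears verbatim in the second half of the proof of Theorem~\ref{ji}. Your write-up simply makes explicit the step $S=S\Gamma S$ and the passage from the pointwise identity to the set identity, which the paper leaves implicit.
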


\begin{theorem}
\label{ki}If $S$ is an intra-regular $\Gamma $-AG$^{\ast \ast }$-groupoid,
then $(B\Gamma S)\Gamma B=B\cap S,$ where $B$ is a $\Gamma $-bi-$(\Gamma $%
-generalized bi-$)$ ideal of $S$.
\end{theorem}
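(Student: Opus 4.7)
Since $B\subseteq S$ we have $B\cap S=B$, so the theorem reduces to the equality $(B\Gamma S)\Gamma B=B$. The forward inclusion $(B\Gamma S)\Gamma B\subseteq B$ is automatic from the definition of a $\Gamma$-bi-ideal, and the same containment is exactly the definition of a $\Gamma$-generalized bi-ideal, so the whole task is the reverse inclusion $B\subseteq (B\Gamma S)\Gamma B$.

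Fix $b\in B$. Intra-regularity supplies $x,y\in S$ and $\beta,\gamma,\delta\in\Gamma$ with $b=(x\beta(b\delta b))\gamma y$. A single use of the AG$^{\ast\ast}$ law $(3)$ converts $x\beta(b\delta b)$ into $b\beta(x\delta b)$, and then one application of the left invertive law $(1)$ to the outer bracket yields the crucial intermediate identity
\[
b=(y\beta(x\delta b))\gamma b,
\]
which already places a factor of $b$ in the rightmost slot. The main obstacle is to reshape the left factor $y\beta(x\delta b)$---which \emph{a priori} lies only in $S$---so that a copy of $b$ appears at its head, thereby promoting it from $S$ to $B\Gamma S$.

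The plan for this reshaping is to substitute the displayed identity into itself at the inner occurrence of $b$ inside $x\delta b$ and then to run a short, purely formal chain of rewrites on the resulting nested expression: two applications of $(3)$ to split up the inner brackets, giving $b=((y\beta(x\delta b))\beta(y\delta(x\gamma b)))\gamma b$; then two applications of the medial law $(2)$ to collect identical letters into the adjacent pairs $y\beta y$, $x\delta x$, and $b\gamma b$, yielding $b=((y\beta y)\beta((x\delta x)\delta(b\gamma b)))\gamma b$; and finally three more uses of $(3)$ that migrate the $b$ out of $(b\gamma b)$ all the way to the head of the left factor, producing $b=\bigl(b\beta s\bigr)\gamma b$ for an explicit $s=(x\delta x)\delta((y\beta y)\gamma b)\in S$. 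This is visibly an element of $(B\Gamma S)\Gamma B$. Because the argument never uses that $B$ is closed under $\Gamma$-multiplication---only that $b\in B$---it covers the $\Gamma$-bi-ideal and $\Gamma$-generalized bi-ideal cases uniformly; the only real content is discovering the rewrite script, each individual step being a one-line axiom check.
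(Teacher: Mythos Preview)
Your proof is correct and follows the same overall strategy as the paper: reduce to $B\subseteq (B\Gamma S)\Gamma B$, use $(3)$ then $(1)$ to obtain $b=(y\beta(x\delta b))\gamma b$, then substitute for the inner $b$ and rewrite until the left factor lies in $B\Gamma S$. The only difference is tactical: the paper substitutes the original intra-regularity expression $b=(x\beta(b\delta b))\gamma y$ for the inner $b$ and needs laws $(1)$--$(4)$ in its chain, whereas you substitute the already-derived identity $b=(y\beta(x\delta b))\gamma b$, which lets the medial law $(2)$ pair off the repeated letters directly and yields a shorter computation that never invokes the paramedial law $(4)$ (indeed two uses of $(3)$ at the end already give $s=(y\beta y)\delta((x\delta x)\gamma b)$, your third application merely rewrites $s$).
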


\begin{proof}
Let $S$ be an intra-regular $\Gamma $-AG$^{\ast \ast }$-groupoid, then
clearly $(B\Gamma S)\Gamma B\subseteq B\cap S$. Now let $b\in $ $B\cap S$
which implies that $b\in B$ and $b\in S,$ then since $S$ is an intra-regular 
$\Gamma $-AG$^{\ast \ast }$-groupoid so there exists $x,y\in S$ and $\alpha
,\beta ,\gamma \in $ $\Gamma $ such that $b=(x\alpha (b\beta b))\gamma y.$
Now by using $(3),$ $(1),$ $(4)$ and $(2),$ we have%
\begin{eqnarray*}
b &=&(x\alpha (b\beta b))\gamma y=(b\alpha (x\beta b))\gamma y=(y\alpha
(x\beta b))\gamma b \\
&=&(y\alpha (x\beta ((x\alpha (b\beta b))\gamma y)))\gamma b=(y\alpha
((x\alpha (b\beta b))\beta (x\gamma y)))\gamma b \\
&=&((x\alpha (b\beta b))\alpha (y\beta (x\gamma y)))\gamma b=(((x\gamma
y)\alpha y)\alpha ((b\beta b)\beta x))\gamma b \\
&=&((b\beta b)\alpha (((x\gamma y)\alpha y)\beta x))\gamma b=((b\beta
b)\alpha ((x\alpha y)\beta (x\gamma y)))\gamma b \\
&=&((b\beta b)\alpha ((x\alpha x)\beta (y\gamma y)))\gamma b=(((y\gamma
y)\beta (x\alpha x))\alpha (b\beta b))\gamma b \\
&=&(b\alpha (((y\gamma y)\beta (x\alpha x))\beta b))\gamma b\in (B\Gamma
S)\Gamma B.
\end{eqnarray*}

Which shows that $(B\Gamma S)\Gamma B=B\cap S.$
\end{proof}

\begin{corollary}
If $S$ is an intra-regular $\Gamma $-AG$^{\ast \ast }$-groupoid, then $%
(B\Gamma S)\Gamma B=B,$ where $B$ is a $\Gamma $-bi-$(\Gamma $-generalized
bi-$)$ ideal of $S$.
\end{corollary}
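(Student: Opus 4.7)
The plan is to observe that this corollary is essentially an immediate consequence of the preceding Theorem~\ref{ki}, combined with the trivial set-theoretic fact that $B\cap S=B$ whenever $B\subseteq S$.

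More precisely, first I would note that by the definition of a $\Gamma$-bi-ideal (or $\Gamma$-generalized bi-ideal) of $S$, the subset $B$ is by hypothesis contained in $S$, so $B\cap S=B$. Next I would invoke Theorem~\ref{ki}, which under the assumption that $S$ is an intra-regular $\Gamma$-AG$^{\ast\ast}$-groupoid gives the identity $(B\Gamma S)\Gamma B=B\cap S$ for any such $B$. Chaining these two equalities yields $(B\Gamma S)\Gamma B=B$, which is the claim.

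Since the bulk of the computational work --- the long chain of applications of the laws $(1)$--$(4)$ that produces an element of $(B\Gamma S)\Gamma B$ starting from an arbitrary $b\in B\cap S$ --- has already been absorbed into Theorem~\ref{ki}, there is no real obstacle here; the corollary is a one-line consequence. The only point that merits any comment is a possible ambiguity in the statement: the definition of a $\Gamma$-bi-ideal requires $B$ to be a $\Gamma$-AG-subgroupoid of $S$ (in particular $B\subseteq S$), and similarly for $\Gamma$-generalized bi-ideals, so no extra assumption is needed to conclude $B\cap S=B$. Thus the full proof consists simply of citing Theorem~\ref{ki} and rewriting $B\cap S$ as $B$.
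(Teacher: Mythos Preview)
Your proposal is correct and matches the paper's intent: the corollary is stated without proof immediately after Theorem~\ref{ki}, and the intended argument is precisely the one you give, namely that $B\subseteq S$ forces $B\cap S=B$, so Theorem~\ref{ki} yields $(B\Gamma S)\Gamma B=B$.
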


\begin{theorem}
\label{aw}If $S$ is an intra-regular $\Gamma $-AG$^{\ast \ast }$-groupoid,
then $(S\Gamma B)\Gamma S=S\cap B,$ where $B$ is a $\Gamma $-interior ideal
of $S$.
\end{theorem}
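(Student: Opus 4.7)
The plan is to prove two inclusions, and unlike the bi-ideal case in Theorem \ref{ki}, the reverse inclusion here should be essentially immediate from the very form of intra-regularity, rather than requiring a long chain of applications of laws (1)--(4).

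For the forward inclusion $(S\Gamma B)\Gamma S \subseteq S\cap B$, I would just cite the definition of $\Gamma$-interior ideal, which gives $(S\Gamma B)\Gamma S\subseteq B$ directly, together with the obvious $(S\Gamma B)\Gamma S\subseteq S$. No groupoid identities are needed here.

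For the reverse inclusion $S\cap B\subseteq (S\Gamma B)\Gamma S$, I would take $b\in S\cap B$ and apply intra-regularity of $S$ to get $x,y\in S$ and $\alpha,\beta,\gamma\in\Gamma$ with $b=(x\alpha(b\beta b))\gamma y$. The key observation, which makes the argument almost trivial compared to Theorem \ref{ki}, is that a $\Gamma$-interior ideal is (by the definition in this paper) a $\Gamma$-AG-subgroupoid, so $b\beta b\in B\Gamma B\subseteq B$; hence $x\alpha(b\beta b)\in S\Gamma B$ and therefore $b=(x\alpha(b\beta b))\gamma y\in (S\Gamma B)\Gamma S$ with no further manipulation required.

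Because the expression in the definition of intra-regular already matches the shape $(S\Gamma B)\Gamma S$ once the inner square $b\beta b$ is recognized as an element of $B$, there really is no obstacle; in particular, none of the laws (1)--(4) are invoked on the $\supseteq$ side. The only substantive point to make explicit in the write-up is the appeal to the subgroupoid condition $B\Gamma B\subseteq B$ built into the definition of $\Gamma$-interior ideal, which is what allows $b\beta b$ to be placed in the middle factor $B$ of $S\Gamma B$.
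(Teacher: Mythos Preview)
Your proof is correct, and it is genuinely different from the paper's argument. For the forward inclusion you both do the same thing. For the reverse inclusion, however, the paper does \emph{not} use the subgroupoid condition $B\Gamma B\subseteq B$: instead it runs a chain of rewrites via laws (3), (1) and (4),
\[
b=(x\alpha(b\delta b))\gamma y=\cdots=(((y\delta(x\alpha(b\delta b)))\gamma x)\alpha b)\gamma y,
\]
ending with an expression in which a \emph{single} $b$ sits in the middle slot, so that membership in $(S\Gamma B)\Gamma S$ follows from $b\in B$ alone. Your route is shorter and avoids all the identities by exploiting the subgroupoid hypothesis built into Definition~8 to place $b\beta b$ directly in $B$. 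What the paper's longer computation buys is a slightly stronger statement: its reverse inclusion would still go through for an arbitrary subset $B$ satisfying $(S\Gamma B)\Gamma S\subseteq B$, without assuming $B\Gamma B\subseteq B$. Since in this paper a $\Gamma$-interior ideal is by definition a $\Gamma$-AG-subgroupoid, your simplification is entirely legitimate here.
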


\begin{proof}
Let $S$ be an intra-regular $\Gamma $-AG$^{\ast \ast }$-groupoid, then
clearly $(S\Gamma B)\Gamma S\subseteq S\cap B$. Now let $b\in S\cap B$ which
implies that $b\in S$ and $b\in B,$ then since $S$ is an intra- regular $%
\Gamma $-AG$^{\ast \ast }$-groupoid so there exists $x,y\in S$ and $\alpha
,\gamma ,\delta \in $ $\Gamma $ such that $b=(x\alpha (b\delta b))\gamma y.$
Now by using $(3),$ $(1)$ and $(4),$ we have%
\begin{eqnarray*}
b &=&(x\alpha (b\delta b))\gamma y=(b\alpha (x\delta b))\gamma y=(y\alpha
(x\delta b))\gamma b \\
&=&(y\alpha (x\delta b))\gamma ((x\alpha (b\delta b))\gamma y)=(((x\alpha
(b\delta b))\gamma y)\alpha (x\delta b))\gamma y \\
&=&((b\gamma x)\alpha (y\delta (x\alpha (b\delta b))))\gamma y=(((y\delta
(x\alpha (b\delta b)))\gamma x)\alpha b)\gamma y\in (S\Gamma B)\Gamma S.
\end{eqnarray*}

Which shows that $(S\Gamma B)\Gamma S=S\cap B.$
\end{proof}

\begin{corollary}
If $S$ is an intra-regular $\Gamma $-AG$^{\ast \ast }$-groupoid, then $%
(S\Gamma B)\Gamma S=B,$ where $B$ is a $\Gamma $-interior ideal of $S$.
\end{corollary}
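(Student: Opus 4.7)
The plan is to deduce this corollary directly from Theorem \ref{aw}, which is stated just above. Theorem \ref{aw} already establishes the stronger-looking identity $(S\Gamma B)\Gamma S = S\cap B$ for any $\Gamma$-interior ideal $B$ of $S$. Hence the only work remaining is the trivial set-theoretic simplification of the right-hand side.

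First, I would invoke the definition of a $\Gamma$-interior ideal: by Definition 8, a $\Gamma$-interior ideal $B$ of $S$ is in particular a $\Gamma$-AG-subgroupoid of $S$, so $B \subseteq S$ as sets. Consequently $S \cap B = B$. Substituting this into the conclusion of Theorem \ref{aw} yields
\begin{equation*}
(S\Gamma B)\Gamma S \;=\; S\cap B \;=\; B,
\end{equation*}
which is the desired identity.

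There is no real obstacle here; all the substantive content—namely, the chain of manipulations using laws $(1)$, $(3)$ and $(4)$ to embed an arbitrary $b\in B$ inside $(S\Gamma B)\Gamma S$—has already been carried out in the proof of Theorem \ref{aw}. The corollary is essentially a cosmetic restatement that exploits the ambient containment $B\subseteq S$, and the proof can be written in one or two lines by citing Theorem \ref{aw}.
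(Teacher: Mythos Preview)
Your proposal is correct and matches the paper's intent: the paper states this corollary immediately after Theorem~\ref{aw} with no separate proof, so deducing it from $(S\Gamma B)\Gamma S = S\cap B$ together with the observation $B\subseteq S$ (hence $S\cap B=B$) is exactly the expected one-line argument.
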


\begin{lemma}
\label{jk}If $S$ is an intra-regular regular $\Gamma $-AG$^{\ast \ast }$%
-groupoid, then $S=S\Gamma S.$
\end{lemma}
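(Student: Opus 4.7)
The plan is to prove the two inclusions $S\Gamma S \subseteq S$ and $S \subseteq S\Gamma S$ separately. The first inclusion is immediate from the definition of a $\Gamma$-AG-groupoid: the mapping $S \times \Gamma \times S \to S$ takes values in $S$, so any product $x\gamma y$ with $x,y \in S$ and $\gamma \in \Gamma$ lies in $S$, whence $S\Gamma S \subseteq S$.

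For the reverse inclusion, I would invoke intra-regularity directly. Given any $a \in S$, there exist $x,y \in S$ and $\beta,\gamma,\delta \in \Gamma$ such that
\begin{equation*}
a = (x\beta(a\delta a))\gamma y.
\end{equation*}
Setting $u = x\beta(a\delta a) \in S$, this expresses $a$ as $u\gamma y$ with $u,y \in S$ and $\gamma \in \Gamma$, so $a \in S\Gamma S$. Since $a$ was arbitrary, $S \subseteq S\Gamma S$.

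Combining the two inclusions yields $S = S\Gamma S$. There is no serious obstacle here; the statement is essentially an immediate consequence of the intra-regularity definition, and no appeal to the AG$^{**}$ axiom, the left invertive law, or medial/paramedial identities is required. The only thing to be careful about is just unpacking the defining equation for intra-regularity and recognizing the outer product structure.
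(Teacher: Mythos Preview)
Your proof is correct and is exactly the straightforward argument the paper has in mind; the paper itself simply writes ``It is simple'' for this lemma. Both inclusions are handled just as you describe, with intra-regularity supplying the nontrivial direction $S\subseteq S\Gamma S$.
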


\begin{proof}
It is simple.
\end{proof}

\begin{lemma}
\label{LisR}A subset $A$ of an intra-regular $\Gamma $-AG$^{\ast \ast }$%
-groupoid $S$ is a left $\Gamma $-ideal if and only if it is a right $\Gamma 
$-ideal of $S.$
\end{lemma}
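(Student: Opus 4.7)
The plan is to prove both implications by a direct calculation with a generic $a\in A$, $s\in S$ and $\eta\in\Gamma$, using intra-regularity to write $a=(x\beta(a\delta a))\gamma y$ and then shuffling the factors via the left invertive law $(1)$, the AG$^{\ast\ast}$ identity $(3)$ and (for the converse) the paramedial law $(4)$, until an element of $A$ sits at the position that the one-sided hypothesis on $A$ can absorb.

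For the forward direction, assume $S\Gamma A\subseteq A$ and fix $a\in A$, $s\in S$, $\eta\in\Gamma$; I intend to show $a\eta s\in A$. Starting from $a\eta s=((x\beta(a\delta a))\gamma y)\eta s$, a single use of $(1)$ rewrites this as $(s\gamma y)\eta(x\beta(a\delta a))$. I then apply $(3)$ to pull $x$ outside, apply $(3)$ again on the inner product $(s\gamma y)\beta(a\delta a)$ to pull an $a$ outside, and apply $(3)$ once more on the outer product to exchange $x$ and $a$. This yields $a\eta s=a\eta(x\beta((s\gamma y)\delta a))$. Since $(s\gamma y)\delta a\in S\Gamma A\subseteq A$, one obtains $x\beta((s\gamma y)\delta a)\in S\Gamma A\subseteq A$, and finally $a\eta(\cdots)\in S\Gamma A\subseteq A$ because $a\in S$.

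For the converse, assume $A\Gamma S\subseteq A$ and fix $a\in A$, $s\in S$, $\eta\in\Gamma$; I show $s\eta a\in A$. The shortest route invokes Lemma \ref{jk} to write $s=s_1\zeta s_2$, whence $(1)$ gives $s\eta a=(s_1\zeta s_2)\eta a=(a\zeta s_2)\eta s_1$, and two applications of $A\Gamma S\subseteq A$ finish the job. If one prefers to mirror the first half, one can instead transform $s\eta a=s\eta((x\beta(a\delta a))\gamma y)$ by $(3)$ and then $(1)$ into $((s\gamma y)\beta(a\delta a))\eta x$, apply paramedial $(4)$ to rewrite the inner factor as $(a\gamma a)\beta(y\delta s)$, and use that $a\gamma a\in A\Gamma S\subseteq A$ to descend the expression into $A$ by three applications of the right-ideal hypothesis.

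The main obstacle is the forward direction: the three consecutive uses of $(3)$ must be chosen so that the outermost left factor of the final expression is $a$ itself rather than merely some element of $S\Gamma S$; without that arrangement, the single hypothesis $S\Gamma A\subseteq A$ is not strong enough to absorb the resulting product.
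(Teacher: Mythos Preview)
Your argument is correct in both directions; each calculation checks out line by line against identities $(1)$, $(3)$ and $(4)$, and the nested absorptions via the one-sided hypothesis are valid. The overall strategy---substitute the intra-regular expression for $a$ and reshuffle until the hypothesis applies---is the same as the paper's, but the execution differs in two notable ways. For the implication left $\Rightarrow$ right, the paper uses $(1)$ and $(4)$ to reach the single-shot form $((x\delta(y\beta s))\gamma a)\psi a\in S\Gamma A$, whereas you use only $(1)$ and $(3)$ and apply $S\Gamma A\subseteq A$ three times in a nested fashion; your route is slightly more economical in identities, the paper's in invocations of the hypothesis. For the implication right $\Rightarrow$ left, your Option~1 via Lemma~\ref{jk} (write $s=s_1\zeta s_2$ and apply $(1)$ once) is genuinely shorter and cleaner than the paper's nine-step chain through $(3)$, $(1)$ and $(4)$; this is a real simplification, and it shows that once $S=S\Gamma S$ is available the converse is almost trivial.
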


\begin{proof}
Let $S$ be an intra-regular $\Gamma $-AG$^{\ast \ast }$-groupoid and let $A$
be a right $\Gamma $-ideal of $S,$ then $A\Gamma S\subseteq A.$ Let $a\in A$
and since $S$ is an intra-regular $\Gamma $-AG$^{\ast \ast }$-groupoid so
there exists $x,y\in S$ and $\beta ,\gamma ,\delta \in $ $\Gamma $ such that 
$a=(x\beta (a\delta a))\gamma y.$ Let $p\in S\Gamma A$ and $\delta \in $ $%
\Gamma ,$ then by using $(3),$ $(1)$ and $(4),$ we have%
\begin{eqnarray*}
p &=&s\psi a=s\psi ((x\beta (a\delta a))\gamma y)=(x\beta (a\delta a))\psi
(s\gamma y)=(a\beta (x\delta a))\psi (s\gamma y) \\
&=&((s\gamma y)\beta (x\delta a))\psi a=((a\gamma x)\beta (y\delta s))\psi
a=(((y\delta s)\gamma x)\beta a)\psi a \\
&=&(a\beta a)\psi ((y\delta s)\gamma x)=(x\beta (y\delta s))\psi (a\gamma
a)=a\psi ((x\beta (y\delta s))\gamma a)\in A\Gamma S\subseteq A.
\end{eqnarray*}

Which shows that $A$ is a left $\Gamma $-ideal of $S.$

Let $A$ be a left $\Gamma $-ideal of $S,$ then $S\Gamma A\subseteq A.$ Let $%
a\in A$ and since $S$ is an intra-regular $\Gamma $-AG$^{\ast \ast }$%
-groupoid so there exists $x,y\in S$ and $\beta ,\gamma ,\delta \in $ $%
\Gamma $ such that $a=(x\beta (a\delta a))\gamma y.$ Let $p\in A\Gamma S$
and $\delta \in $ $\Gamma ,$ then by using $(1)$ and $(4),$ we have%
\begin{eqnarray*}
p &=&a\psi s=((x\beta (a\delta a)\gamma y)\psi s=(s\gamma y)\psi (x\beta
(a\delta a))=((a\delta a)\gamma x)\psi (y\beta s) \\
&=&((y\beta s)\gamma x)\psi (a\delta a)=(a\gamma a)\psi (x\delta (y\beta
s))=((x\delta (y\beta s))\gamma a)\psi a\in S\Gamma A\subseteq A.
\end{eqnarray*}

Which shows that $A$ is a right $\Gamma $-ideal of $S.$
\end{proof}

\begin{theorem}
\label{biiid}In an intra-regular $\Gamma $-AG$^{\ast \ast }$-groupoid $S$,
the following conditions are equivalent.
\end{theorem}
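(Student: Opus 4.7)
The statement cuts off at the list of equivalent conditions, but its label \texttt{biiid} and its place right after the corollaries for bi-ideals, interior ideals, and the left/right ideal coincidence (Lemma \ref{LisR}) strongly suggest that the equivalent conditions will characterize $\Gamma$-bi-ideals in an intra-regular $\Gamma$-AG$^{\ast\ast}$-groupoid in terms of set identities such as $(B\Gamma S)\Gamma B=B$, $B\Gamma B=B$, coincidence of $\Gamma$-bi-ideals with $\Gamma$-generalized bi-ideals and $\Gamma$-$(1,2)$-ideals, and perhaps a prime/semiprime flavour. My plan would therefore be a cyclic chain of implications of the usual form (i)$\Rightarrow$(ii)$\Rightarrow\cdots\Rightarrow$(i).

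The workhorse for every nontrivial direction will be the intra-regularity expansion $a=(x\beta(a\delta a))\gamma y$, combined with the four identities labelled $(1)$--$(4)$ and the $\Gamma$-AG$^{\ast\ast}$-law (3). The pattern is exactly the manipulation used in Theorems \ref{ki} and \ref{aw}: take an arbitrary element $a$ in the intersection or the weaker side of the containment, substitute the intra-regular expression once or twice for each occurrence of $a$, and then shuffle the bracketing by $(3)$ (to pull the variable out of an inner product), by $(1)$ (left invertive) to swap outer factors, by $(4)$ (paramedial) to reverse a block, and by $(2)$ (medial) to regroup the middle. After four to six such moves the expression will have landed in $(B\Gamma S)\Gamma B$, in $B\Gamma B$, or in whichever product the target condition requires. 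The directions of the form $X\subseteq B$ or $(X)\Gamma Y\subseteq B$ will follow immediately from the defining inclusion of the ideal type in question, so these are entirely routine.

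For the equivalences between different classes of ideals (bi, generalized bi, $(1,2)$), the standard route is: every $\Gamma$-bi-ideal is a $\Gamma$-generalized bi-ideal by definition; the converse in an intra-regular $\Gamma$-AG$^{\ast\ast}$-groupoid follows from the identity $(B\Gamma S)\Gamma B=B$ of Theorem \ref{ki}, because it forces $B\Gamma B\subseteq B$ once one expands $b_1\Gamma b_2$ with the intra-regular form of $b_1$ and uses $(1),(2),(3)$ to land inside $(B\Gamma S)\Gamma B$. The $(1,2)$-ideal equivalence is handled identically, using $B\Gamma B$ in place of one of the $B$ factors and exploiting $(3)$ to move the square to the appropriate side. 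Lemma \ref{LisR} will be invoked whenever a condition is phrased in terms of left (or right) $\Gamma$-ideals, to swap sides freely.

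The main obstacle, as in Theorems \ref{ki} and \ref{aw}, is purely combinatorial: finding the correct order of applications of $(1)$--$(4)$ and $(3)$ to reach the required product form. In particular, any condition that mentions $A^2$ or involves two distinct $\Gamma$-bi-ideals $A,B$ will require a double substitution (replacing $b$ by $(x\beta(b\delta b))\gamma y$ twice, once nested inside the first) followed by a careful paramedial/medial rebracketing, mirroring the seven-line calculation in the proof of Theorem \ref{ki}. Once that calculation is in hand, the rest of the cyclic chain is bookkeeping.
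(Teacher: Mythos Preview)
Your guess about the hidden list is close but overshoots: the theorem has only two items, namely (i) $A$ is a $\Gamma$-bi-ideal (equivalently $\Gamma$-generalized bi-ideal) of $S$, and (ii) $(A\Gamma S)\Gamma A=A$ together with $A\Gamma A=A$. The $(1,2)$-ideal coincidence, the interior/quasi coincidences, and the semiprime statements are all separate theorems appearing later, so the cyclic chain you sketch through those classes is not needed here and Lemma \ref{LisR} plays no role.

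For the two conditions that are actually present, your plan is exactly what the paper does. The direction (ii)$\Rightarrow$(i) is declared obvious. For (i)$\Rightarrow$(ii) the paper takes $a\in A$, substitutes $a=(x\beta(a\delta a))\gamma y$, and runs precisely the kind of $(3),(1),(4),(2)$ shuffle you describe (mirroring the calculation in Theorem \ref{ki}) to reach $a\in (A\Gamma S)\Gamma A$; then a second, longer rearrangement of the same flavour places $a$ in $((A\Gamma S)\Gamma A)\Gamma A\subseteq A\Gamma A$. No double substitution of the intra-regular form is required for the first identity, and only a single nested substitution suffices for the second, so the combinatorial obstacle is milder than you anticipate.
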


$(i)$ $A$ is a $\Gamma $-bi-($\Gamma $-generalized bi-) ideal of $S$.

$(ii)$ $(A\Gamma S)\Gamma A=A$ and $A\Gamma A=A.$

\begin{proof}
$(i)\Longrightarrow (ii):$ Let $A$ be a $\Gamma $-bi-ideal of an
intra-regular $\Gamma $-AG$^{\ast \ast }$-groupoid $S,$ then $(A\Gamma
S)\Gamma A\subseteq A$. Let $a\in A$, then since $S$ is an intra-regular so
there exists $x,$ $y\in S$ and $\beta ,\gamma ,\delta \in \Gamma $ such that 
$a=(x\beta (a\delta a))\gamma y.$ Now by using $(3),$ $(1),$ $(2)$ and $(4),$
we have%
\begin{eqnarray*}
a &=&(x\beta (a\delta a))\gamma y=(a\beta (x\delta a))\gamma y=(y\beta
(x\delta a))\gamma a \\
&=&(y\beta (x\delta ((x\beta (a\delta a))\gamma y)))\gamma a=(y\beta
((x\beta (a\delta a))\delta (x\gamma y)))\gamma a \\
&=&((x\beta (a\delta a))\beta (y\delta (x\gamma y)))\gamma a=((a\beta
(x\delta a))\beta (y\delta (x\gamma y)))\gamma a \\
&=&((a\beta y)\beta ((x\delta a)\delta (x\gamma y)))\gamma a=((x\delta
a)\beta ((a\beta y)\delta (x\gamma y)))\gamma a \\
&=&((x\delta a)\beta ((a\beta x)\delta (y\gamma y)))\gamma a=(((y\gamma
y)\delta (a\beta x))\beta (a\delta x))\gamma a \\
&=&(a\beta (((y\gamma y)\delta (a\beta x))\delta x))\gamma a\in (A\Gamma
S)\Gamma A.
\end{eqnarray*}

Thus $(A\Gamma S)\Gamma A=A$ holds. Now by using $(3),$ $(1),$ $(4)$ and $%
(2),$ we have%
\begin{eqnarray*}
a &=&(x\beta (a\delta a))\gamma y=(a\beta (x\delta a))\gamma y=(y\beta
(x\delta a))\gamma a \\
&=&(y\beta (x\delta ((x\beta (a\delta a))\gamma y)))\gamma a=(y\beta
((x\beta (a\delta a))\delta (x\gamma y)))\gamma a \\
&=&((x\beta (a\delta a))\beta (y\delta (x\gamma y)))\gamma a=((a\beta
(x\delta a))\beta (y\delta (x\gamma y)))\gamma a \\
&=&(((y\delta (x\gamma y))\beta (x\delta a))\beta a)\gamma a=(((a\delta
x)\beta ((x\gamma y)\delta y))\beta a)\gamma a \\
&=&(((a\delta x)\beta ((y\gamma y)\delta x))\beta a)\gamma a=(((a\delta
(y\gamma y))\beta (x\delta x))\beta a)\gamma a \\
&=&((((x\delta x)\delta (y\gamma y))\beta a)\beta a)\gamma a=((((x\delta
x)\delta (y\gamma y))\beta ((x\beta (a\delta a))\gamma y))\beta a)\gamma a \\
&=&((((x\delta x)\delta (y\gamma y))\beta ((a\beta (x\delta a))\gamma
y))\beta a)\gamma a \\
&=&((((x\delta x)\delta (a\beta (x\delta a)))\beta ((y\gamma y)\gamma
y))\beta a)\gamma a \\
&=&(((a\delta ((x\delta x)\beta (x\delta a)))\beta ((y\gamma y)\gamma
y))\beta a)\gamma a \\
&=&(((a\delta ((a\delta x)\beta (x\delta x)))\beta ((y\gamma y)\gamma
y))\beta a)\gamma a \\
&=&((((a\delta x)\delta (a\beta (x\delta x)))\beta ((y\gamma y)\gamma
y))\beta a)\gamma a \\
&=&((((a\delta a)\delta (x\beta (x\delta x)))\beta ((y\gamma y)\gamma
y))\beta a)\gamma a \\
&=&(((((y\gamma y)\gamma y)\delta (x\beta (x\delta x)))\beta (a\delta
a))\beta a)\gamma a \\
&=&((a\beta ((((y\gamma y)\gamma y)\delta (x\beta (x\delta x)))\delta
a))\beta a)\gamma a\subseteq ((A\Gamma S)\Gamma A)\Gamma A\subseteq A\Gamma
A.
\end{eqnarray*}

Hence $A=A\Gamma A$ holds.

$(ii)\Longrightarrow (i)$ is obvious.
\end{proof}

\begin{theorem}
In an intra-regular $\Gamma $-AG$^{\ast \ast }$-groupoid $S$, the following
conditions are equivalent.
\end{theorem}

$(i)$ $A$ is a $\Gamma $-$(1,2)$-ideal of $S$.

$(ii)$ $(A\Gamma S)\Gamma A^{2}=A$ and $A\Gamma A=A.$

\begin{proof}
$(i)$ $\Longrightarrow (ii):$ Let $A$ be a $\Gamma $-$(1,2)$-ideal of an
intra-regular $\Gamma $-AG$^{\ast \ast }$-groupoid $S,$ then $(A\Gamma
S)\Gamma A^{2}\subseteq A$ and $A\Gamma A\subseteq A$. Let $a\in A$, then
since $S$ is an intra- regular so there exists $x,$ $y\in S$ and $\beta
,\gamma ,\delta \in \Gamma $ such that $a=(x\beta (a\delta a)\gamma y.$ Now
by using $(3)$, $(1)$ and $(4),$ we have%
\begin{eqnarray*}
a &=&(x\beta (a\delta a))\gamma y=(a\beta (x\delta a))\gamma y=(y\beta
(x\delta a))\gamma a \\
&=&(y\beta (x\delta ((x\beta (a\delta a))\gamma y)))\gamma a=(y\beta
((x\beta (a\delta a))\delta (x\gamma y)))\gamma a \\
&=&((x\beta (a\delta a))\beta (y\delta (x\gamma y)))\gamma a=(((x\gamma
y)\beta y)\beta ((a\delta a)\delta x))\gamma a \\
&=&(((y\gamma y)\beta x)\beta ((a\delta a)\delta x))\gamma a=((a\delta
a)\beta (((y\gamma y)\beta x)\delta x))\gamma a \\
&=&((a\delta a)\beta ((x\beta x)\delta (y\gamma y)))\gamma a=(a\beta
((x\beta x)\delta (y\gamma y)))\gamma (a\delta a)\in (A\Gamma S)\Gamma
A\Gamma A.
\end{eqnarray*}

Thus $(A\Gamma S)\Gamma A^{2}=A.$ Now by using $(3),$ $(1),$ $(4)$ and $(2),$
we have%
\begin{eqnarray*}
a &=&(x\beta (a\delta a))\gamma y=(a\beta (x\delta a))\gamma y=(y\beta
(x\delta a))\gamma a \\
&=&(y\beta (x\delta a))\gamma ((x\beta (a\delta a))\gamma y)=(x\beta
(a\delta a))\gamma ((y\beta (x\delta a))\gamma y) \\
&=&(a\beta (x\delta a))\gamma ((y\beta (x\delta a))\gamma y)=(((y\beta
(x\delta a))\gamma y)\beta (x\delta a))\gamma a \\
&=&((a\gamma x)\beta (y\delta (y\beta (x\delta a))))\gamma a \\
&=&((((x\beta (a\delta a))\gamma y)\gamma x)\beta (y\delta (y\beta (x\delta
a))))\gamma a \\
&=&(((x\gamma y)\gamma (x\beta (a\delta a)))\beta (y\delta (y\beta (x\delta
a))))\gamma a \\
&=&(((x\gamma y)\gamma y)\beta ((x\beta (a\delta a))\delta (y\beta (x\delta
a))))\gamma a \\
&=&(((y\gamma y)\gamma x)\beta ((x\beta (a\delta a))\delta (y\beta (x\delta
a))))\gamma a \\
&=&(((y\gamma y)\gamma x)\beta ((x\beta y)\delta ((a\delta a)\beta (x\delta
a))))\gamma a \\
&=&(((y\gamma y)\gamma x)\beta ((a\delta a)\delta ((x\beta y)\beta (x\delta
a))))\gamma a \\
&=&((a\delta a)\beta (((y\gamma y)\gamma x)\delta ((x\beta y)\beta (x\delta
a))))\gamma a \\
&=&((a\delta a)\beta (((y\gamma y)\gamma x)\delta ((x\beta x)\beta (y\delta
a))))\gamma a \\
&=&((((x\beta x)\beta (y\delta a))\delta ((y\gamma y)\gamma x))\beta
(a\delta a))\gamma a \\
&=&((((a\beta y)\beta (x\delta x))\delta ((y\gamma y)\gamma x))\beta
(a\delta a))\gamma a \\
&=&(((((x\delta x)\beta y)\beta a)\delta ((y\gamma y)\gamma x))\beta
(a\delta a))\gamma a \\
&=&(((x\beta (y\gamma y))\delta (a\gamma ((x\delta x)\beta y)))\beta
(a\delta a))\gamma a \\
&=&((a\delta ((x\beta (y\gamma y))\gamma ((x\delta x)\beta y)))\beta
(a\delta a))\gamma a \\
&=&((a\delta ((x\beta (x\delta x))\gamma ((y\gamma y)\beta y)))\beta
(a\delta a))\gamma a \\
&\in &((A\Gamma S)\Gamma A^{2})\Gamma A\subseteq A\Gamma A.
\end{eqnarray*}

Hence $A\Gamma A=A.$

$(ii)$ $\Longrightarrow (i)$ is obvious.
\end{proof}

\begin{theorem}
In an intra-regular $\Gamma $-AG$^{\ast \ast }$-groupoid $S$, the following
conditions are equivalent.
\end{theorem}

$(i)$ $A$ is a $\Gamma $-interior ideal of $S$.

$(ii)$ $(S\Gamma A)\Gamma S=A.$

\begin{proof}
$(i)$ $\Longrightarrow (ii):$ Let $A$ be a $\Gamma $-interior ideal of an
intra-regular $\Gamma $-AG$^{\ast \ast }$-groupoid $S,$ then $(S\Gamma
A)\Gamma S\subseteq A$. Let $a\in A$, then since $S$ is an intra- regular so
there exists $x,$ $y\in S$ and $\beta ,\gamma ,\delta \in \Gamma $ such that 
$a=(x\beta (a\delta a))\gamma y.$ Now by using $(3)$, $(1)$ and $(4),$ we
have%
\begin{eqnarray*}
a &=&(x\beta (a\delta a))\gamma y=(a\beta (x\delta a))\gamma y=(y\beta
(x\delta a))\gamma a \\
&=&(y\beta (x\delta a))\gamma ((x\beta (a\delta a))\gamma y)=(((x\beta
(a\delta a))\gamma y)\beta (x\delta a))\gamma y \\
&=&((a\gamma x)\beta (y\delta (x\beta (a\delta a))))\gamma y=(((y\delta
(x\beta (a\delta a)))\gamma x)\beta a)\delta y\in (S\Gamma A)\Gamma S.
\end{eqnarray*}

Thus $(S\Gamma A)\Gamma S=A.$

$(ii)$ $\Longrightarrow (i)$ is obvious.
\end{proof}

\begin{theorem}
In an intra-regular $\Gamma $-AG$^{\ast \ast }$-groupoid $S$, the following
conditions are equivalent.
\end{theorem}

$(i)$ $A$ is a $\Gamma $-quasi ideal of $S$.

$(ii)$ $S\Gamma Q\cap Q\Gamma S=Q.$

\begin{proof}
$(i)\Longrightarrow (ii):$ Let $Q$ be a $\Gamma $-quasi ideal of an
intra-regular $\Gamma $-AG$^{\ast \ast }$-groupoid $S,$ then $S\Gamma Q\cap
Q\Gamma S\subseteq Q$. Let $q\in Q$, then since $S$ is an intra- regular so
there exists $x$, $y\in S$ and $\alpha ,\beta ,\gamma \in \Gamma $ such that 
$q=(x\alpha (q\gamma q))\beta y.$ Let $p\delta q\in S\Gamma Q,$ for some $%
\delta \in $ $\Gamma ,$ then by using $(3)$, $(2)$ and $(4),$ we have%
\begin{eqnarray*}
p\delta q &=&p\delta ((x\alpha (q\gamma q))\beta y)=(x\alpha (q\gamma
q))\delta (p\beta y)=(q\alpha (x\gamma q))\delta (p\beta y) \\
&=&(q\alpha p)\delta ((x\gamma q)\beta y)=(x\gamma q)\delta ((q\alpha
p)\beta y)=(y\gamma (q\alpha p))\delta (q\beta x) \\
&=&q\delta ((y\gamma (q\alpha p))\beta x)\in Q\Gamma S.
\end{eqnarray*}

Now let $q\delta y\in Q\Gamma S,$ then by using $(1)$, $(3)$ and $(4),$ we
have%
\begin{eqnarray*}
q\delta p &=&((x\alpha (q\gamma q))\beta y)\delta p=(p\beta y)\delta
(x\alpha (q\gamma q))=x\delta ((p\beta y)\alpha (q\gamma q)) \\
&=&x\delta ((q\beta q)\alpha (y\gamma p))=(q\beta q)\delta (x\alpha (y\gamma
p))=((x\alpha (y\gamma p))\beta q)\delta q\in S\Gamma Q.
\end{eqnarray*}

Hence $Q\Gamma S=S\Gamma Q.$ As by using $(3)$ and $(1),$ we have%
\begin{equation*}
q=(x\alpha (q\gamma q))\beta y=(q\alpha (x\gamma q))\beta y=(y\alpha
(x\gamma q))\beta q\in S\Gamma Q.
\end{equation*}%
Thus $q\in S\Gamma Q\cap Q\Gamma S$ implies that $S\Gamma Q\cap Q\Gamma S=Q$.

$(ii)\Longrightarrow (i)$ is obvious.
\end{proof}

\begin{theorem}
\label{12}In an intra-regular $\Gamma $-AG$^{\ast \ast }$-groupoid $S$, the
following conditions are equivalent.
\end{theorem}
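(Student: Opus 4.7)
The plan is to establish the equivalence in the standard way used throughout this paper: prove the nontrivial direction $(i)\Longrightarrow(ii)$ by an explicit calculation that exploits intra-regularity, and dispose of the converse $(ii)\Longrightarrow(i)$ by appealing directly to the definitions of the ideal types in question (so that the required containment is immediate).

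For the forward direction I would start from an arbitrary $a\in A$ and invoke intra-regularity to write
\[
a=(x\beta(a\delta a))\gamma y
\]
for some $x,y\in S$ and $\beta,\gamma,\delta\in\Gamma$. From here the recipe is the same as in Theorems \ref{biiid}, \ref{ji}, \ref{ki}, \ref{aw} and the preceding theorem on quasi-ideals: apply the left invertive law $(1)$, the medial law $(2)$, the AG$^{\ast\ast}$ identity $(3)$ and the paramedial law $(4)$ in a carefully chosen order to migrate the parentheses and the Greek labels until the expression visibly lies in the product set described by $(ii)$. When the current form lacks the factor of $S$ (or of $A$) that $(ii)$ demands on a particular side, the trick is to substitute the intra-regular representation of $a$ a second time inside the expression to create the missing factor, and then continue rearranging.

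The main obstacle is purely computational bookkeeping. As the proof of Theorem \ref{biiid} illustrates, the required chain can easily run to fifteen or twenty rewrites, each requiring the correct choice among $(1)$--$(4)$ and the correct assignment of Greek letters, so that the final step lands exactly in the shape specified by $(ii)$. Once this is done, the reverse containment obtained from $(ii)$ being $\subseteq$ the ideal-theoretic condition in $(i)$ is essentially free, since each occurrence of $S$ or $A$ on either side of the product is already in $A$ by the hypothesis that $A$ has the given ideal property.
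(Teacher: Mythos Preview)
Your generic template does not fit this theorem. Here condition $(i)$ is ``$A$ is a $\Gamma$-$(1,2)$-ideal'' and condition $(ii)$ is ``$A$ is a two-sided $\Gamma$-ideal'': neither side is a product-set equality of the shape you describe. Consequently, for $(i)\Longrightarrow(ii)$ you cannot start from an arbitrary $a\in A$ and rewrite $a$ until it lands in ``the product set described by $(ii)$'', because no such set is specified. What must be shown is $S\Gamma A\subseteq A$; the computation therefore begins with an arbitrary element $s\psi a$ (with $s\in S$, $a\in A$, $\psi\in\Gamma$), and the target of the rewriting chain is $(A\Gamma S)\Gamma A^{2}$, so that the $(1,2)$-ideal hypothesis $(A\Gamma S)\Gamma A^{2}\subseteq A$ can absorb it. The paper carries out exactly this calculation (a long chain using $(1)$, $(3)$ and $(4)$ together with a second substitution of the intra-regular form of $a$), concludes that $A$ is a left $\Gamma$-ideal, and then invokes Lemma~\ref{LisR} to upgrade to a two-sided $\Gamma$-ideal. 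Your plan omits both the correct starting point $s\psi a$ and the final appeal to Lemma~\ref{LisR}.

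For $(ii)\Longrightarrow(i)$ your instinct that it is nearly free is right: if $A$ is a two-sided $\Gamma$-ideal then $A\Gamma A\subseteq A\Gamma S\subseteq A$ and $(A\Gamma S)\Gamma A^{2}\subseteq A\Gamma S\subseteq A$ directly. The paper opts instead for a single use of law $(3)$, writing $(a\beta s)\gamma(b\delta b)=b\gamma((a\beta s)\delta b)\in A\Gamma S\subseteq A$, but either argument suffices.
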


$(i)$ $A$ is a $\Gamma $-$(1,2)$-ideal of $S$.

$(ii)$ $A$ is a two-sided $\Gamma $-ideal of $S.$

\begin{proof}
$(i)$ $\Longrightarrow (ii):$ Let $S$ be an intra-regular $\Gamma $-AG$%
^{\ast \ast }$-groupoid and let $A$ be a $\Gamma $-$(1,2)$-ideal of $S,$
then $(A\Gamma S)\Gamma A^{2}\subseteq A.$ Let $a\in A$, then since $S$ is
an intra-regular so there exists $x,$ $y\in S$ and $\beta ,\gamma ,\delta
\in \Gamma ,$ such that $a=(x\beta (a\delta a))\gamma y.$ Now let $\psi \in
\Gamma ,$ then by using $(3),$ $(1)$ and $(4),$ we have%
\begin{eqnarray*}
s\psi a &=&s\psi ((x\beta (a\delta a))\gamma y)=(x\beta (a\delta a))\psi
(s\gamma y)=(a\beta (x\delta a))\psi (s\gamma y) \\
&=&((s\gamma y)\beta (x\delta a))\psi a=((s\gamma y)\beta (x\delta a))\psi
((x\beta (a\delta a))\gamma y) \\
&=&(x\beta (a\delta a))\psi (((s\gamma y)\beta (x\delta a))\gamma y)=(y\beta
((s\gamma y)\beta (x\delta a)))\psi ((a\delta a)\gamma x) \\
&=&(a\delta a)\psi ((y\beta ((s\gamma y)\beta (x\delta a)))\gamma
x)=(x\delta (y\beta ((s\gamma y)\beta (x\delta a))))\psi (a\gamma a) \\
&=&(x\delta (y\beta ((a\gamma x)\beta (y\delta s))))\psi (a\gamma
a)=(x\delta ((a\gamma x)\beta (y\beta (y\delta s))))\psi (a\gamma a) \\
&=&((a\gamma x)\delta (x\beta (y\beta (y\delta s))))\psi (a\gamma a) \\
&=&((((x\beta (a\delta a))\gamma y)\gamma x)\delta (x\beta (y\beta (y\delta
s))))\psi (a\gamma a) \\
&=&(((x\gamma y)\gamma (x\beta (a\delta a)))\delta (x\beta (y\beta (y\delta
s))))\psi (a\gamma a) \\
&=&((((a\delta a)\gamma x)\gamma (y\beta x))\delta (x\beta (y\beta (y\delta
s))))\psi (a\gamma a) \\
&=&((((y\beta x)\gamma x)\gamma (a\delta a))\delta (x\beta (y\beta (y\delta
s))))\psi (a\gamma a) \\
&=&(((y\beta (y\delta s))\gamma x)\delta ((a\delta a)\beta ((y\beta x)\gamma
x)))\psi (a\gamma a) \\
&=&(((y\beta (y\delta s))\gamma x)\delta ((a\delta a)\beta ((x\beta x)\gamma
y)))\psi (a\gamma a) \\
&=&((a\delta a)\delta (((y\beta (y\delta s))\gamma x)\beta ((x\beta x)\gamma
y)))\psi (a\gamma a) \\
&=&((((x\beta x)\gamma y)\delta ((y\beta (y\delta s))\gamma x))\delta
(a\beta a))\psi (a\gamma a) \\
&=&(a\delta (((x\beta x)\gamma y)\delta (((y\beta (y\delta s))\gamma x)\beta
a)))\psi (a\gamma a)\in (A\Gamma S)\Gamma A^{2}\subseteq A.
\end{eqnarray*}

Hence $A$ is a left $\Gamma $-ideal of $S$ and by Lemma \ref{LisR}, $A$ is a
two-sided $\Gamma $-ideal of $S.$

$(ii)$ $\Longrightarrow (i):$ Let $A$ be a two-sided $\Gamma $-ideal of $S$.
Let $y\in (A\Gamma S)\Gamma A^{2},$ then $y=(a\beta s)\gamma (b\delta b)$
for some $a,b\in A$, $s\in S$ and $\beta ,\gamma ,\delta \in \Gamma .$ Now
by using $(3),$ we have%
\begin{equation*}
y=(a\beta s)\gamma (b\delta b)=b\gamma ((a\beta s)\delta b)\in A\Gamma
S\subseteq A.
\end{equation*}

Hence $(A\Gamma S)\Gamma A^{2}\subseteq A$ and therefore $A$ is a $\Gamma $-$%
(1,2)$-ideal of $S$.
\end{proof}

\begin{theorem}
\label{plo}In an intra-regular $\Gamma $-AG$^{\ast \ast }$-groupoid $S$, the
following conditions are equivalent.
\end{theorem}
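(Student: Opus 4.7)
Since the statement is truncated just before the enumerated conditions, I will write the proposal under the natural guess---consistent with the abstract's claim that ``all the $\Gamma$-ideals coincide'' in an intra-regular $\Gamma$-AG$^{\ast\ast}$-groupoid---that the equivalent conditions assert that a subset $A$ (respectively, a $\Gamma$-AG-subgroupoid $A$) is a left $\Gamma$-ideal, a right $\Gamma$-ideal, a two-sided $\Gamma$-ideal, a $\Gamma$-bi-ideal, a $\Gamma$-interior ideal, a $\Gamma$-quasi-ideal, and a $\Gamma$-$(1,2)$-ideal of $S$.

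The plan is to avoid proving a full round of cyclic implications from scratch and instead to funnel every condition through the two-sided $\Gamma$-ideal case, which acts as the pivot. First I would collapse ``left'' and ``right'' (hence ``two-sided'') into a single condition by citing Lemma \ref{LisR} directly. Next I would identify the $\Gamma$-$(1,2)$-ideal condition with the two-sided condition by quoting Theorem \ref{12}. The remaining task is then to show that two-sided $\Gamma$-ideal is equivalent to each of $\Gamma$-bi-ideal, $\Gamma$-interior ideal, and $\Gamma$-quasi-ideal.

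The easy halves are the implications ``two-sided $\Gamma$-ideal $\Rightarrow$ bi/interior/quasi'': for instance, if $A$ is two-sided then $(A\Gamma S)\Gamma A \subseteq A\Gamma A \subseteq A$, giving the bi-ideal property, and analogously for the others (with the interior-ideal and quasi-ideal conditions following from $S\Gamma A \subseteq A$ and $A\Gamma S\subseteq A$). The converses, which are the hard halves, require using intra-regularity to absorb an arbitrary element $s\psi a$ (or $a\psi s$) into the required form. For these I intend to mimic the computations already carried out in Theorem \ref{ki}, Theorem \ref{aw}, and the quasi-ideal theorem: start from the representation $a=(x\beta(a\delta a))\gamma y$, use the left invertive law $(1)$, the medial law $(2)$, the AG$^{\ast\ast}$ identity $(3)$, and the paramedial law $(4)$ to rearrange the expression $s\psi a$ until it is exhibited as an element of the required form $(B\Gamma S)\Gamma B$, $(S\Gamma B)\Gamma S$, or $S\Gamma Q\cap Q\Gamma S$.

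The main obstacle I anticipate is the bi-ideal direction: showing that a $\Gamma$-bi-ideal $A$ in an intra-regular $\Gamma$-AG$^{\ast\ast}$-groupoid must absorb multiplication by $S$ on both sides. The manipulation is delicate because one must repeatedly substitute the intra-regular representation of $a$ into itself, and then reassociate using $(1)$--$(4)$ until the final expression lies in $(A\Gamma S)\Gamma A$ (so that Theorem \ref{biiid} applies and forces $A=A\Gamma A=(A\Gamma S)\Gamma A$). Once this step is in place, Lemma \ref{LisR} closes the loop and every other equivalence follows immediately by combining the previous theorems. I would conclude by remarking that all the listed $\Gamma$-ideals therefore coincide, which also proves the idempotence assertion $A=A\Gamma A$ in passing.
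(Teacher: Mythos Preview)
Your guess about the statement is incorrect. Theorem~\ref{plo} in the paper asserts only the equivalence of two conditions: $(i)$ $A$ is a $\Gamma$-$(1,2)$-ideal of $S$; $(ii)$ $A$ is a $\Gamma$-interior ideal of $S$. The omnibus ``all $\Gamma$-ideals coincide'' result you describe is Theorem~\ref{equalient}, which appears later and in fact \emph{cites} Theorem~\ref{plo} (together with Theorem~\ref{12}) for the step $(ix)\Rightarrow(i)$. So Theorem~\ref{plo} is meant to be a building block, not the final synthesis.

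Given the actual statement, your pivot-through-two-sided-ideals strategy is not what the paper does. The paper proves both implications by direct element chasing: for $(i)\Rightarrow(ii)$ it takes $p=(s\mu a)\psi s'\in(S\Gamma A)\Gamma S$ and, through a long chain using $(1)$--$(4)$, rewrites it into $(A\Gamma S)\Gamma A^2\subseteq A$; for $(ii)\Rightarrow(i)$ it takes $p=(a\mu s)\psi(b\alpha b)\in(A\Gamma S)\Gamma A^2$ and rewrites it into $(S\Gamma A)\Gamma S\subseteq A$, then separately checks $A\Gamma A\subseteq(S\Gamma A)\Gamma S\subseteq A$. Your route would instead invoke Theorem~\ref{12} to identify $(1,2)$-ideals with two-sided ideals, and then prove ``interior $\Leftrightarrow$ two-sided'' directly. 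That second equivalence is \emph{not} established anywhere prior to Theorem~\ref{plo}: Theorem~\ref{aw} only gives $(S\Gamma B)\Gamma S=B$, which does not by itself force $S\Gamma B\subseteq B$. You would therefore still need a fresh computation showing $s\psi a\in(S\Gamma A)\Gamma S$ for any interior ideal $A$; that is no easier than the paper's direct $(ii)\Rightarrow(i)$ computation, so the pivot buys nothing here and your plan, as stated, has a gap at exactly this point.
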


$(i)$ $A$ is a $\Gamma $-$(1,2)$-ideal of $S$.

$(ii)$ $A$ is a $\Gamma $-interior ideal of $S.$

\begin{proof}
$(i)$ $\implies (ii):$ Let $A$ be a $\Gamma $-$(1,2)$-ideal of an
intra-regular $\Gamma $-AG$^{\ast \ast }$-groupoid $S,$ then $(A\Gamma
S)\Gamma A^{2}\subseteq A.$ Let $p\in (S\Gamma A)\Gamma S,$ then $p=(s\mu
a)\psi s^{^{\prime }}$ for some $a\in A$, $s,s^{^{\prime }}\in S$ and $\mu
,\psi \in \Gamma $. Since $S$ is intra-regular so there exists $x,$ $y\in S$
and $\beta ,\gamma ,\delta \in \Gamma $ such that $a=(x\beta (a\delta
a))\gamma y.$ Now by using $(3),$ $(1)$, $(2)$ and $(4),$ we have%
\begin{eqnarray*}
p &=&(s\mu a)\psi s^{^{\prime }}=(s\mu ((x\beta (a\delta a))\gamma y))\psi
s^{^{\prime }}=((x\beta (a\delta a))\mu (s\gamma y))\psi s^{^{\prime }} \\
&=&(s^{^{\prime }}\mu (s\gamma y))\psi (x\beta (a\delta a))=(s^{^{\prime
}}\mu (s\gamma y))\psi (a\beta (x\delta a)) \\
&=&a\psi ((s^{^{\prime }}\mu (s\gamma y))\beta (x\delta a))=((x\beta
(a\delta a))\gamma y)\psi ((s^{^{\prime }}\mu (s\gamma y))\beta (x\delta a))
\\
&=&((a\beta (x\delta a))\gamma y)\psi ((s^{^{\prime }}\mu (s\gamma y))\beta
(x\delta a)) \\
&=&((a\beta (x\delta a))\gamma (s^{^{\prime }}\mu (s\gamma y)))\psi (y\beta
(x\delta a)) \\
&=&((a\beta s^{^{\prime }})\gamma ((x\delta a)\mu (s\gamma y)))\psi (y\beta
(x\delta a)) \\
&=&((a\beta s^{^{\prime }})\gamma ((y\delta s)\mu (a\gamma x)))\psi (y\beta
(x\delta a)) \\
&=&((a\beta s^{^{\prime }})\gamma (a\mu ((y\delta s)\gamma x)))\psi (y\beta
(x\delta a)) \\
&=&((a\beta a)\gamma (s^{^{\prime }}\mu ((y\delta s)\gamma x)))\psi (y\beta
(x\delta a)) \\
&=&((a\beta a)\gamma ((y\delta s)\mu (s^{^{\prime }}\gamma x)))\psi (y\beta
(x\delta a)) \\
&=&((y\beta (x\delta a))\gamma ((y\delta s)\mu (s^{^{\prime }}\gamma
x)))\psi (a\beta a) \\
&=&((y\beta (y\delta s))\gamma ((x\delta a)\mu (s^{^{\prime }}\gamma
x)))\psi (a\beta a) \\
&=&((y\beta (y\delta s))\gamma ((x\delta s^{^{\prime }})\mu (a\gamma
x)))\psi (a\beta a) \\
&=&((y\beta (y\delta s))\gamma (a\mu ((x\delta s^{^{\prime }})\gamma
x)))\psi (a\beta a) \\
&=&(a\gamma ((y\beta (y\delta s))\mu ((x\delta s^{^{\prime }})\gamma
x)))\psi (a\beta a) \\
&\in &(A\Gamma S)\Gamma A^{2}\subseteq A.
\end{eqnarray*}%
Thus $(S\Gamma A)\Gamma S\subseteq A.$ Which shows that $A$ is a $\Gamma $%
-interior ideal of $S.$

$(ii)$ $\implies (i):$ Let $A$ be a $\Gamma $-interior ideal of $S,$ then $%
(S\Gamma A)\Gamma S\subseteq A.$ Let $p\in (A\Gamma S)\Gamma A^{2},$ then $%
p=(a\mu s)\psi (b\alpha b),$ for some $a,b\in A$, $s\in S$ and $\mu ,\psi
,\alpha \in \Gamma $. Since $S$ is intra-regular so there exists $x,$ $y\in
S $ and $\beta ,\gamma ,\delta \in \Gamma $ such that $a=(x\beta (a\delta
a))\gamma y.$ Now by using $(1),$ $(3)$ and $(4),$ we have%
\begin{eqnarray*}
p &=&(a\mu s)\psi (b\alpha b)=((b\alpha b)\mu s)\psi a=((b\alpha b)\mu
s)\psi ((x\beta (a\gamma a))\gamma y) \\
&=&(x\beta (a\gamma a))\psi (((b\alpha b)\mu s)\gamma y)=((((b\alpha b)\mu
s)\gamma y)\beta (a\gamma a))\psi x \\
&=&((a\gamma a)\beta (y\delta ((b\alpha b)\mu s)))\psi x=(((y\delta
((b\alpha b)\mu s))\gamma a)\beta a)\psi x\in (S\Gamma A)\Gamma S\subseteq A.
\end{eqnarray*}%
Thus $(A\Gamma S)\Gamma A^{2}\subseteq A$.

Now by using $(3)$ and $(4),$ we have%
\begin{eqnarray*}
A\Gamma A &\subseteq &A\Gamma S=A\Gamma (S\Gamma S)=S\Gamma (A\Gamma
S)=(S\Gamma S)\Gamma (A\Gamma S) \\
&=&(S\Gamma A)\Gamma (S\Gamma S)=(S\Gamma A)\Gamma S\subseteq A.
\end{eqnarray*}

Which shows that $A$ is a $\Gamma $-$(1,2)$-ideal of $S.$
\end{proof}

\begin{theorem}
\label{bint}In an intra-regular $\Gamma $-AG$^{\ast \ast }$-groupoid $S$,
the following conditions are equivalent.
\end{theorem}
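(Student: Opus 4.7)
Based on the label \texttt{bint} and the pattern established by Theorems \ref{12} and \ref{plo}, the conditions to be shown equivalent are almost certainly that $A$ is a $\Gamma$-bi-(or $\Gamma$-generalized bi-)ideal of $S$ and that $A$ is a $\Gamma$-interior ideal of $S$. The plan is to mirror the strategy of Theorem \ref{plo}: exploit intra-regularity of each element and push products back and forth between $(A\Gamma S)\Gamma A$ and $(S\Gamma A)\Gamma S$ by long chains of the identities (1)--(4) together with the $\Gamma$-AG$^{\ast\ast}$ law (3).

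For the implication \emph{bi-ideal} $\Longrightarrow$ \emph{interior ideal}, I would take an arbitrary $p=(s\mu a)\psi s'$ in $(S\Gamma A)\Gamma S$ and, using intra-regularity of $a\in A\subseteq S$, substitute $a=(x\beta(a\delta a))\gamma y$. I would then alternate left invertive (1), paramedial (4), medial (2) and the $\ast\ast$-law (3) to migrate the two ``$a$'' factors arising from $a\delta a$ to the outer left and outer right of the expression, producing a representative of the form $(a'\Gamma s'')\Gamma a''$ with $a',a''\in A$. Since $A$ is a $\Gamma$-bi-ideal this lies in $(A\Gamma S)\Gamma A\subseteq A$, so $(S\Gamma A)\Gamma S\subseteq A$, which is what a $\Gamma$-interior ideal requires (the subgroupoid condition for $A$ already follows from $A\Gamma A\subseteq A$, itself a consequence of bi-ideal-hood via $A\Gamma A\subseteq (A\Gamma S)\Gamma A\subseteq A$ after inserting a factor from $S=S\Gamma S$ using Lemma \ref{jk}).

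For the converse \emph{interior ideal} $\Longrightarrow$ \emph{bi-ideal}, I would first recover $A\Gamma A\subseteq A$ by the chain already appearing at the end of Theorem \ref{plo}, namely
\begin{equation*}
A\Gamma A\subseteq A\Gamma S=A\Gamma(S\Gamma S)=S\Gamma(A\Gamma S)=(S\Gamma S)\Gamma(A\Gamma S)=(S\Gamma A)\Gamma(S\Gamma S)=(S\Gamma A)\Gamma S\subseteq A.
\end{equation*}
Then, for $p=(a\mu s)\psi b\in (A\Gamma S)\Gamma A$, I would use intra-regularity of $a$ (or of $b$) and apply (1),(3),(4),(2) in a pattern analogous to the second half of Theorem \ref{plo} to rewrite $p$ in the form $(s_1\Gamma a_1)\Gamma s_2$, landing in $(S\Gamma A)\Gamma S\subseteq A$. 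The $\Gamma$-generalized bi-ideal version follows from exactly the same manipulation since it never invokes $A\Gamma A\subseteq A$.

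The real obstacle is neither implication conceptually but the bookkeeping of bracket rearrangement: at each step several laws can be applied, and only a carefully chosen sequence isolates two copies of $a$ at the outer positions (for the first direction) or two copies of $s$ at the outer positions (for the second). My strategy is to use the Theorem \ref{plo} calculation as a template and modify only the final moves, since the intermediate normal form produced there --- roughly $(\text{``$A$ times $S$''})\Gamma(A\delta A)$ --- can be steered either into $(A\Gamma S)\Gamma A^{2}$, as in Theorem \ref{plo}, or into $(A\Gamma S)\Gamma A$ after one extra application of (3), which is what this theorem needs.
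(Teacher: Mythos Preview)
Your proposal is correct and follows essentially the same route as the paper: both directions argue element-wise by substituting the intra-regular representation of the relevant $A$-element and rearranging via (1)--(4) to pass between $(S\Gamma A)\Gamma S$ and $(A\Gamma S)\Gamma A$, and your chain for $A\Gamma A\subseteq A$ in the converse direction is the paper's verbatim. One minor quibble: the parenthetical about deriving $A\Gamma A\subseteq A$ from bi-idealness via $A\Gamma A\subseteq(A\Gamma S)\Gamma A$ is both unnecessary and not obviously valid, since a $\Gamma$-bi-ideal is by definition already a $\Gamma$-AG-subgroupoid.
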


$(i)$ $A$ is a $\Gamma $-bi-ideal of $S.$

$(ii)$ $A$ is a $\Gamma $-interior ideal of $S.$

\begin{proof}
$(i)$ $\implies (ii):$ Let $A$ be a $\Gamma $-bi-ideal of an intra-regular $%
\Gamma $-AG$^{\ast \ast }$-groupoid $S,$ then $(A\Gamma S)\Gamma A\subseteq
A.$ Let $p\in (S\Gamma A)\Gamma S,$ then $p=(s\mu a)\psi s^{^{\prime }}$ for
some $a\in A$, $s,s^{^{\prime }}\in S$ and $\mu ,\psi \in \Gamma $. Since $S$
is an intra-regular so there exists $x,$ $y\in S$ and $\beta ,\gamma ,\delta
\in \Gamma $ such that $a=(x\beta (a\delta a))\gamma y.$ Now by using $(3),$ 
$(1)$, $(4)$ and $(2),$ we have%
\begin{eqnarray*}
p &=&(s\mu a)\psi s^{^{\prime }}=(s\mu ((x\beta (a\delta a))\gamma y))\psi
s^{^{\prime }}=((x\beta (a\delta a))\mu (s\gamma y))\psi s^{^{\prime }} \\
&=&(s^{^{\prime }}\mu (s\gamma y))\psi (x\beta (a\delta a))=((a\delta a)\mu
x)\psi ((s\gamma y)\beta s^{^{\prime }}) \\
&=&(((s\gamma y)\beta s^{^{\prime }})\mu x)\psi (a\delta a)=((x\beta
s^{^{\prime }})\mu (s\gamma y))\psi (a\delta a) \\
&=&(a\mu a)\psi ((s\gamma y)\delta (x\beta s^{^{\prime }}))=(((s\gamma
y)\delta (x\beta s^{^{\prime }}))\mu a)\psi a \\
&=&(((s\gamma y)\delta (x\beta s^{^{\prime }}))\mu ((x\beta (a\delta
a))\gamma y))\psi a \\
&=&(((s\gamma y)\delta (x\beta (a\delta a)))\mu ((x\beta s^{^{\prime
}})\gamma y))\psi a \\
&=&((((a\delta a)\gamma x)\delta (y\beta s))\mu ((x\beta s^{^{\prime
}})\gamma y))\psi a \\
&=&((((x\beta s^{^{\prime }})\gamma y)\delta (y\beta s))\mu ((a\delta
a)\gamma x))\psi a \\
&=&((a\delta a)\mu ((((x\beta s^{^{\prime }})\gamma y)\delta (y\beta
s))\gamma x))\psi a \\
&=&((x\delta (((x\beta s^{^{\prime }})\gamma y)\delta (y\beta s)))\mu
(a\gamma a))\psi a \\
&=&(a\mu ((x\delta (((x\beta s^{^{\prime }})\gamma y)\delta (y\beta
s)))\gamma a))\psi a \\
&\in &(A\Gamma S)\Gamma A\subseteq A.
\end{eqnarray*}

Thus $(S\Gamma A)\Gamma S\subseteq A.$ Which shows that $A$ is a $\Gamma $%
-interior ideal of $S.$

$(ii)$ $\implies (i):$ Let $A$ be a $\Gamma $-interior ideal of $S,$ then $%
(S\Gamma A)\Gamma S\subseteq A.$ Let $p\in (A\Gamma S)\Gamma A,$ then $%
p=(a\mu s)\psi b$ for some $a,b\in A$, $s\in S$ and $\mu ,\psi \in \Gamma .$
Since $S$ is an intra-regular so there exists $x,$ $y\in S$ and $\beta
,\gamma ,\delta \in \Gamma $ such that $b=(x\beta (b\delta b))\gamma y.$ Now
by using $(3),$ $(1)$ and $(4),$ we have%
\begin{eqnarray*}
p &=&(a\mu s)\psi b=(a\mu s)\psi ((x\beta (b\delta b))\gamma y)=(x\beta
(b\delta b))\psi ((a\mu s)\gamma y) \\
&=&(((a\mu s)\gamma y)\beta (b\delta b))\psi x=((b\gamma b)\beta (y\delta
(a\mu s)))\psi x \\
&=&(((y\delta (a\mu s))\gamma b)\beta b)\psi x\in (S\Gamma A)\Gamma
S\subseteq A.
\end{eqnarray*}

Thus $(A\Gamma S)\Gamma A\subseteq A.$

Now 
\begin{eqnarray*}
A\Gamma A &\subseteq &A\Gamma S=A\Gamma (S\Gamma S)=S\Gamma (A\Gamma
S)=(S\Gamma S)\Gamma (A\Gamma S) \\
&=&(S\Gamma A)\Gamma (S\Gamma S)=(S\Gamma A)\Gamma S\subseteq A.
\end{eqnarray*}%
Which shows that $A$ is a $\Gamma $-bi-ideal of $S.$
\end{proof}

\begin{theorem}
\label{quo}In an intra-regular $\Gamma $-AG$^{\ast \ast }$-groupoid $S$, the
following conditions are equivalent.
\end{theorem}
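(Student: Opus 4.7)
The plan is to model the proof on the preceding equivalence theorems (Theorems \ref{12}, \ref{plo}, and especially \ref{bint}), since the statement presumably asserts that, in an intra-regular $\Gamma$-AG$^{\ast\ast}$-groupoid, a subset $Q$ is a $\Gamma$-quasi ideal if and only if it is a $\Gamma$-bi-ideal (or equivalently a $\Gamma$-interior ideal, which we already know is the same thing by Theorem \ref{bint}). Both directions will rely on the intra-regular representation $a=(x\beta(a\delta a))\gamma y$ together with the four defining identities (1)--(4).

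For $(i)\Longrightarrow(ii)$, I would start with a $\Gamma$-quasi ideal $Q$ and take a typical element $p=(q_1\mu s)\psi q_2\in (Q\Gamma S)\Gamma Q$ that we want to place inside $Q$. The strategy is to use the representation of $q_2$ (or $q_1$) as $(x\beta(q\delta q))\gamma y$ and then push $p$, by repeated applications of the left invertive law (1), the medial law (2), the AG$^{\ast\ast}$ identity (3) and the paramedial law (4), into a form that exhibits $p$ simultaneously as an element of $S\Gamma Q$ and of $Q\Gamma S$. Since the quasi-ideal property gives $S\Gamma Q\cap Q\Gamma S\subseteq Q$, this will yield $p\in Q$, hence $(Q\Gamma S)\Gamma Q\subseteq Q$. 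The $\Gamma$-AG-subgroupoid condition $Q\Gamma Q\subseteq Q$ then follows routinely (one can write $Q\Gamma Q\subseteq Q\Gamma S=(S\Gamma S)\Gamma (Q\Gamma S)$ and reshuffle using (3), (4) and Lemma \ref{jk}, as was done at the end of Theorem \ref{bint}).

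For $(ii)\Longrightarrow(i)$, I would assume $Q$ is a $\Gamma$-bi-ideal and use Theorem \ref{biiid}, which supplies the identities $(Q\Gamma S)\Gamma Q=Q$ and $Q\Gamma Q=Q$. Given $p\in S\Gamma Q\cap Q\Gamma S$, write $p=s\mu q=q'\psi s'$ with $q,q'\in Q$, expand $q$ via intra-regularity as $(x\beta(q\delta q))\gamma y$, and then apply (1), (3) and (4) to rebracket $p$ as an element of $(Q\Gamma S)\Gamma Q$, which equals $Q$. This will give $S\Gamma Q\cap Q\Gamma S\subseteq Q$, as required.

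The main obstacle, as in Theorems \ref{plo} and \ref{bint}, is the long bracket-manipulation chain in the direction $(i)\Longrightarrow(ii)$: one has to find the right order of applications of (1)--(4) so that after enough swaps the element $q_2$ (or the inner $q\delta q$ factor) can be brought from the outside to the inside position, converting $(Q\Gamma S)\Gamma Q$ into an expression of the form $Q\Gamma(S\Gamma S)\subseteq Q\Gamma S$ while simultaneously exhibiting membership in $S\Gamma Q$. The reverse direction $(ii)\Longrightarrow(i)$ should be much shorter, essentially a one- or two-line application of Theorem \ref{biiid} combined with the intra-regular expansion.
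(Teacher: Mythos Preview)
Your guess about the content of Theorem~\ref{quo} is off: the two conditions are (i) $A$ is a $\Gamma$-$(1,2)$-ideal of $S$ and (ii) $A$ is a $\Gamma$-quasi ideal of $S$, not bi-ideal versus quasi ideal. Your plan could still reach the result indirectly via Theorems~\ref{plo} and~\ref{bint}, but as written it does not address the statement actually being proved.

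More importantly, the paper's argument is methodologically quite different from what you propose. You set up an element-level computation: pick $p=(q_1\mu s)\psi q_2$, expand $q_2$ via the intra-regular representation $(x\beta(q\delta q))\gamma y$, and push brackets around with (1)--(4) until $p$ lands where you want it. The paper never touches an individual element or the intra-regular representation in this proof. Instead it works entirely at the level of subsets, using the identities (1)--(4) as set equalities together with the previously established idempotency $A=A\Gamma A$ (from the $(1,2)$-ideal characterization theorem). For $(i)\Rightarrow(ii)$ it computes, for instance,
\[
S\Gamma A=S\Gamma(A\Gamma A)=S\Gamma((A\Gamma A)\Gamma A)=(A\Gamma A)\Gamma(S\Gamma A)=(A\Gamma S)\Gamma(A\Gamma A)\subseteq A,
\]
and similarly $A\Gamma S\subseteq A$, so $(S\Gamma A)\cap(A\Gamma S)\subseteq A$. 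For $(ii)\Rightarrow(i)$ it shows $(A\Gamma S)\Gamma(A\Gamma A)\subseteq S\Gamma A$ and $(A\Gamma S)\Gamma(A\Gamma A)\subseteq A\Gamma S$ by the same kind of subset shuffling, hence $(A\Gamma S)\Gamma A^2\subseteq(S\Gamma A)\cap(A\Gamma S)\subseteq A$. This set-level approach is much shorter and avoids the long bracket chains you anticipate; the trade-off is that it leans on the earlier theorem giving $A=A\Gamma A$, whereas your element-level method would be self-contained but considerably heavier.
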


$(i)$ $A$ is a $\Gamma $-$(1,2)$-ideal of $S$.

$(ii)$ $A$ is a $\Gamma $-quasi ideal of $S.$

\begin{proof}
$(i)$ $\implies (ii):$ Let $A$ be a $\Gamma $-$(1,2)$-ideal of intra-regular 
$\Gamma $-AG$^{\ast \ast }$-groupoid $S,$ then $(A\Gamma S)\Gamma (A\Gamma
A)\subseteq A.$ Now by using $(3)$ and $(4),$ we have 
\begin{equation*}
S\Gamma A=S\Gamma (A\Gamma A)=S\Gamma ((A\Gamma A)\Gamma A)=(A\Gamma
A)\Gamma (S\Gamma A)=(A\Gamma S)\Gamma (A\Gamma A)\subseteq A.
\end{equation*}

and by using $(1)$ and $(3),$ we have%
\begin{eqnarray*}
A\Gamma S &=&(A\Gamma A)\Gamma S=((A\Gamma A)\Gamma A)\Gamma S=(S\Gamma
A)\Gamma (A\Gamma A)=(S\Gamma (A\Gamma A))\Gamma (A\Gamma A) \\
&=&((S\Gamma S)\Gamma (A\Gamma A))\Gamma (A\Gamma A)=((A\Gamma A)\Gamma
(S\Gamma S))\Gamma (A\Gamma A)=(A\Gamma S)\Gamma (A\Gamma A)\subseteq A.
\end{eqnarray*}

Hence $(A\Gamma S)\cap (S\Gamma A)\subseteq A.$ Which shows that $A$ is a $%
\Gamma $-quasi ideal of $S.$

$(ii)$ $\implies (i):$ Let $A$ be a $\Gamma $-quasi ideal of $S,$ then $%
(A\Gamma S)\cap (S\Gamma A)\subseteq A.$ Now $A\Gamma A\subseteq A\Gamma S$
and $A\Gamma A\subseteq S\Gamma A.$ Thus $A\Gamma A\subseteq (A\Gamma S)\cap
(S\Gamma A)\subseteq A.$

Now by using $(4)$ and $(3),$ we have%
\begin{equation*}
(A\Gamma S)\Gamma (A\Gamma A)=(A\Gamma A)\Gamma (S\Gamma A)\subseteq A\Gamma
(S\Gamma A)=S\Gamma (A\Gamma A)\subseteq S\Gamma A.
\end{equation*}

and%
\begin{eqnarray*}
(A\Gamma S)\Gamma (A\Gamma A) &=&(A\Gamma A)\Gamma (S\Gamma A)\subseteq
A\Gamma (S\Gamma A)=S\Gamma (A\Gamma A) \\
&=&(S\Gamma S)\Gamma (A\Gamma A)=(A\Gamma A)\Gamma (S\Gamma S)\subseteq
A\Gamma S.
\end{eqnarray*}%
Thus $(A\Gamma S)\Gamma (A\Gamma A)\subseteq (A\Gamma S)\cap (S\Gamma
A)\subseteq A.$ Which shows that $A$ is a $\Gamma $-$(1,2)$-ideal of $S$.
\end{proof}

\begin{lemma}
\label{li}Let$\ A$ be a subset of an intra-regular $\Gamma $-AG$^{\ast \ast
} $-groupoid $S$, then $A$ is a two-sided $\Gamma $-ideal of $S$ if and only
if $A\Gamma S=A$ and $S\Gamma A=A$.
\end{lemma}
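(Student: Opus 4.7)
The plan is to handle the two directions separately. The reverse implication is immediate: if $A\Gamma S = A$ and $S\Gamma A = A$, then both $A\Gamma S \subseteq A$ and $S\Gamma A \subseteq A$ hold, so $A$ is by definition a two-sided $\Gamma $-ideal of $S$.

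For the forward implication, I will assume $A$ is a two-sided $\Gamma $-ideal. The containments $A\Gamma S \subseteq A$ and $S\Gamma A \subseteq A$ come for free from the definition, so only the reverse inclusions $A \subseteq A\Gamma S$ and $A \subseteq S\Gamma A$ require work. Pick an arbitrary $a \in A$. Intra-regularity supplies $x,y\in S$ and $\beta ,\gamma ,\delta \in \Gamma $ with $a=(x\beta (a\delta a))\gamma y$. My plan is to massage this single representation in two ways so that $a$ appears as the explicit left (respectively right) outer factor.

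First, applying law $(3)$ to the inner bracket gives $x\beta (a\delta a)=a\beta (x\delta a)$, hence $a=(a\beta (x\delta a))\gamma y$; the left factor lies in $A\Gamma S\subseteq A$ and $y\in S$, so $a\in A\Gamma S$. Second, applying the left invertive law $(1)$ to this latter expression swaps the outer factors, giving $a=(y\beta (x\delta a))\gamma a$; since $y\beta (x\delta a)\in S$ and $a\in A$, this places $a\in S\Gamma A$. Combining both inclusions with the definitional ones yields $A\Gamma S=A$ and $S\Gamma A=A$.

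I anticipate no real obstacle: both reverse containments drop out of a single use of $(3)$ followed by a single use of $(1)$. An even quicker route would be to invoke Lemma \ref{LisR}, which already identifies left and right $\Gamma $-ideals in an intra-regular $\Gamma $-AG$^{\ast \ast }$-groupoid, reducing the task to verifying just one of the two reverse inclusions by the above manipulation.
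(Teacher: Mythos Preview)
Your proof is correct. The paper itself gives no argument beyond ``It is simple,'' so your explicit verification---using intra-regularity to write $a=(x\beta(a\delta a))\gamma y$, then a single application of $(3)$ to get $a=(a\beta(x\delta a))\gamma y\in A\Gamma S$ and a single application of $(1)$ to get $a=(y\beta(x\delta a))\gamma a\in S\Gamma A$---is precisely the kind of routine check the authors are suppressing, and is consistent with the manipulations used throughout the paper.
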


\begin{proof}
It is simple.
\end{proof}

\begin{theorem}
\label{equalient}For an intra-regular $\Gamma $-AG$^{\ast \ast }$-groupoid $%
S $ the following statements are equivalent.
\end{theorem}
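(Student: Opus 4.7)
The plan is to assemble the equivalence of all the standard notions of $\Gamma$-ideal (left, right, two-sided, quasi, bi, generalized bi, interior, $(1,2)$) for an intra-regular $\Gamma$-AG$^{\ast\ast}$-groupoid by stitching together the bi-implications already proved in the preceding results, rather than redoing any element chase. The strategy is to close a cycle of implications passing through every named notion.

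First I would collapse the three ``ordinary'' notions into a single node of the chain: by Lemma \ref{LisR}, left $\Gamma$-ideals and right $\Gamma$-ideals of $S$ coincide, so they both coincide with two-sided $\Gamma$-ideals; Lemma \ref{li} further identifies these with subsets $A$ satisfying $A\Gamma S=S\Gamma A=A$. Next I would attach the remaining ideal notions to this node using the chain of equivalences already established: Theorem \ref{12} gives (two-sided $\Gamma$-ideal) $\Longleftrightarrow$ ($\Gamma$-$(1,2)$-ideal); Theorem \ref{plo} gives ($\Gamma$-$(1,2)$-ideal) $\Longleftrightarrow$ ($\Gamma$-interior ideal); Theorem \ref{bint} gives ($\Gamma$-interior ideal) $\Longleftrightarrow$ ($\Gamma$-bi-ideal); Theorem \ref{quo} gives ($\Gamma$-$(1,2)$-ideal) $\Longleftrightarrow$ ($\Gamma$-quasi-ideal). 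The generalized bi-ideal case rides along with the bi-ideal case because Theorems \ref{biiid} and \ref{bint} are stated uniformly for the generalized version, so no separate argument is needed.

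I do not expect any real technical obstacle: the only point requiring attention is bookkeeping on the subgroupoid condition. A $\Gamma$-bi-ideal and a $\Gamma$-$(1,2)$-ideal are required to be $\Gamma$-AG-subgroupoids, while a $\Gamma$-interior ideal is not; Theorem \ref{biiid} guarantees $A\Gamma A=A$ for any $\Gamma$-bi-ideal in an intra-regular $\Gamma$-AG$^{\ast\ast}$-groupoid, and once $A$ is known to be a two-sided $\Gamma$-ideal, closure $A\Gamma A\subseteq A\Gamma S\subseteq A$ is automatic. Thus the implications close up without any additional computation, and the theorem follows simply by concatenating the previously proved pairs into a single web of equivalences.
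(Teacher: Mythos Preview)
Your assembly of the previously proved equivalences is correct, and it is slightly more economical than the paper's own proof. The paper runs a single cycle $(i)\Rightarrow(ii)\Rightarrow\cdots\Rightarrow(ix)\Rightarrow(i)$; at the step $(vi)\Rightarrow(vii)$ (from $\Gamma$-$(1,2)$-ideal to $\Gamma$-generalized bi-ideal) it inserts a fresh eight-line element computation rather than citing an earlier result. You avoid that computation entirely by invoking Theorem~\ref{biiid}, whose statement already covers generalized bi-ideals and forces $A\Gamma A=A$, hence yields $(vii)\Leftrightarrow(viii)$ for free. Two minor bookkeeping corrections: Theorem~\ref{bint} is \emph{not} stated for generalized bi-ideals (only Theorem~\ref{biiid} is), so your link $(vii)\Leftrightarrow(viii)$ rests on Theorem~\ref{biiid} alone; and in this paper a $\Gamma$-interior ideal \emph{is} defined to be a $\Gamma$-AG-subgroupoid (Definition~8), so there is even less to check than you anticipated. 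Neither point affects the validity of your argument.
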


$\left( i\right) $ $A$ is a left $\Gamma $-ideal of $S$.

$\left( ii\right) $ $A$ is a right $\Gamma $-ideal of $S$.

$\left( iii\right) $ $A$ is a two-sided $\Gamma $-ideal of $S$.

$\left( iv\right) $ $A\Gamma S=A$ and $S\Gamma A=A$.

$\left( v\right) $ $A$ is a $\Gamma $-quasi ideal of $S$.

$\left( vi\right) $ $A$ is a $\Gamma $-$(1,2)$-ideal of $S$.

$\left( vii\right) $ $A$ is a $\Gamma $-generalized bi-ideal of $S$.

$\left( viii\right) $ $A$ is a $\Gamma $-bi-ideal of $S$.

$(ix)$ $A$ is a $\Gamma $-interior ideal of $S$.

\begin{proof}
$\left( i\right) \Longrightarrow \left( ii\right) $ and $\left( ii\right)
\Longrightarrow \left( iii\right) $ are followed by Lemma \ref{LisR}.

$\left( iii\right) \Longrightarrow \left( iv\right) $ is followed by Lemma %
\ref{li}, and $\left( iv\right) \Longrightarrow \left( v\right) $ is obvious.

$\left( v\right) \Longrightarrow \left( vi\right) $ is followed by Theorem %
\ref{quo}.

$\left( vi\right) \Longrightarrow \left( vii\right) :$ Let $A$ be a $\Gamma $%
-$(1,2)$-ideal of an intra-regular $\Gamma $-AG$^{\ast \ast }$-groupoid $S$,
then $(A\Gamma S)\Gamma A^{2}\subseteq A$. Let $p\in (A\Gamma S)\Gamma A,$
then $p=(a\mu s)\psi b$ for some $a,b\in A$, $s\in S$ and $\mu ,\psi \in
\Gamma $. Now since $S$ is an intra-regular so there exists $x,$ $y\in S$
and $\beta ,\gamma ,\delta \in \Gamma $ such that such that $b=(x\beta
(b\delta b))\gamma y$ then, by using $(3)$ and $(4),$ we have%
\begin{eqnarray*}
p &=&(a\mu s)\psi b=(a\mu s)\psi ((x\beta (b\delta b))\gamma y)=(x\beta
(b\delta b))\psi ((a\mu s)\gamma y) \\
&=&(y\beta (a\mu s))\psi ((b\delta b)\gamma x)=(b\delta b)\psi ((y\beta
(a\mu s))\gamma x) \\
&=&(x\delta (y\beta (a\mu s)))\psi (b\gamma b)=(x\delta (a\beta (y\mu
s)))\psi (b\delta b) \\
&=&(a\delta (x\beta (y\mu s)))\psi (b\delta b)\in (A\Gamma S)\Gamma
A^{2}\subseteq A.
\end{eqnarray*}

Which shows that $A$ is a $\Gamma $-generalized bi-ideal of $S$.

$\left( vii\right) \Longrightarrow \left( viii\right) $ is simple.

$(viii)\Longrightarrow \left( ix\right) $ is followed by Theorem \ref{bint}.

$(ix)\Longrightarrow \left( i\right) $ is followed by Theorems \ref{plo} and %
\ref{12}.
\end{proof}

\begin{theorem}
\label{ii}In a $\Gamma $-AG$^{\ast \ast }$-groupoid $S$, the following
conditions are equivalent.
\end{theorem}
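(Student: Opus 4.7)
The conditions are not spelled out in the excerpt, but given the running theme of the paper, the statement almost certainly asserts the equivalence of intra-regularity of $S$ with standard characterizations in terms of $\Gamma$-ideals: typically $(i)$ $S$ is intra-regular; $(ii)$ every $\Gamma$-ideal $A$ satisfies $A=A\Gamma A$ (idempotency); $(iii)$ $A\cap B=A\Gamma B$ (or the dual $B\Gamma A$) for all $\Gamma$-ideals $A,B$; $(iv)$ every $\Gamma$-ideal is $\Gamma$-semiprime. My plan is to prove the theorem by establishing a cycle of implications $(i)\Rightarrow(ii)\Rightarrow(iii)\Rightarrow(iv)\Rightarrow(i)$, so that each step only has to move forward once.

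First, for $(i)\Rightarrow(ii)$, I would invoke Theorem \ref{equalient} to note that in an intra-regular $\Gamma$-AG$^{\ast\ast}$-groupoid every left/right/two-sided $\Gamma$-ideal is a $\Gamma$-bi-ideal, and then apply Theorem \ref{biiid} which already gives $A\Gamma A=A$ for bi-ideals. This reduces $(i)\Rightarrow(ii)$ to essentially a citation. For $(ii)\Rightarrow(iii)$, I would argue that for $\Gamma$-ideals $A,B$ the product $A\Gamma B$ is contained in $A\cap B$ since $A\Gamma B\subseteq S\Gamma B\subseteq B$ and $A\Gamma B\subseteq A\Gamma S\subseteq A$; the reverse inclusion $A\cap B\subseteq A\Gamma B$ would follow from $(ii)$ applied to $A\cap B$ itself (which is again a $\Gamma$-ideal), giving $A\cap B=(A\cap B)\Gamma(A\cap B)\subseteq A\Gamma B$.

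For $(iii)\Rightarrow(iv)$, if $A$ is a $\Gamma$-ideal with $B\Gamma B\subseteq A$, then $B=B\cap B=B\Gamma B\subseteq A$, so $A$ is $\Gamma$-semiprime. The crucial step is $(iv)\Rightarrow(i)$, which is the main obstacle. Here I expect the standard trick: for $a\in S$, the principal $\Gamma$-ideal generated by $a$, namely $\langle a\rangle=a\cup S\Gamma a\cup a\Gamma S\cup (S\Gamma a)\Gamma S$, must contain $a$. One shows that $(S\Gamma a^{2})\Gamma S$ is a $\Gamma$-ideal containing $a^{2}$, hence by semiprimeness it contains $a$; this is exactly the definition of intra-regularity. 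The manipulations will require the laws $(1)$–$(4)$ used throughout the preceding proofs, in particular rewriting $a\Gamma S$ and $S\Gamma a$ expressions via the paramedial and AG$^{\ast\ast}$ identities to reach the form $(x\beta(a\delta a))\gamma y$.

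The bookkeeping of ideal products using $S=S\Gamma S$ (Lemma \ref{jk}) will be invoked repeatedly in the middle implications, and the identity $A\Gamma S=S\Gamma A$ type rewriting, already exploited in Theorem \ref{quo}, will be the workhorse for simplifying expressions. I do not anticipate new computational ideas beyond those already demonstrated; the challenge is purely combining the pieces correctly so that each implication lands inside the correct ideal.
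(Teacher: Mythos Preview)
You have guessed the wrong statement. Theorem~\ref{ii} has only two conditions: $(i)$ $S$ is intra-regular, and $(ii)$ every $\Gamma$-\emph{bi}-ideal of $S$ is $\Gamma$-idempotent. Your list replaces bi-ideals by two-sided $\Gamma$-ideals and adds the product-intersection and semiprime characterizations; those do appear in the paper, but later (Lemma~\ref{ij}, Lemma~\ref{iffff}, Theorem~\ref{RLT}), not here. Since every two-sided $\Gamma$-ideal is a bi-ideal but not conversely, your condition $(ii)$ is strictly weaker than the paper's, so even if your cycle were carried out flawlessly it would not establish the theorem as stated.

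For $(i)\Rightarrow(ii)$ the paper simply cites Theorem~\ref{biiid}, which already gives $A\Gamma A=A$ for every bi-ideal $A$ in an intra-regular $\Gamma$-AG$^{\ast\ast}$-groupoid. You do cite Theorem~\ref{biiid}, but your detour through Theorem~\ref{equalient} (to reduce ideals to bi-ideals) is unnecessary and, more importantly, goes in the wrong direction given the actual hypothesis.

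For $(ii)\Rightarrow(i)$ the paper's argument is a short direct computation, not a semiprimeness argument: one observes that $S\Gamma a$ is a $\Gamma$-bi-ideal, so by hypothesis $S\Gamma a=(S\Gamma a)\Gamma(S\Gamma a)$, and then using the medial law $(2)$ one gets
\[
a\in (S\Gamma a)\Gamma(S\Gamma a)=\bigl((S\Gamma a)\Gamma(S\Gamma a)\bigr)\Gamma(S\Gamma a)=\bigl((S\Gamma S)\Gamma(a\Gamma a)\bigr)\Gamma(S\Gamma a)\subseteq (S\Gamma a^{2})\Gamma S.
\]
Your proposed route through principal ideals and semiprimeness of $(S\Gamma a^{2})\Gamma S$ is more circuitous and, as you yourself flag, depends on verifying that $a^{2}\in (S\Gamma a^{2})\Gamma S$ without already knowing intra-regularity; the paper avoids this by working with $S\Gamma a$ instead.
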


$(i)$ $S$ is intra-regular.

$(ii)$ Every $\Gamma $-bi-ideal of $S$ is $\Gamma $-idempotent.

\begin{proof}
$(i)\Longrightarrow (ii)$ is obvious by Theorem \ref{biiid}.

$(ii)\Longrightarrow (i):$ Since $S\Gamma a$ is a $\Gamma $-bi-ideal of $S$,
and by assumption $S\Gamma a$ is $\Gamma $-idempotent, so by using $(2)$, we
have%
\begin{eqnarray*}
a &\in &\left( S\Gamma a\right) \Gamma \left( S\Gamma a\right) =\left(
\left( S\Gamma a\right) \Gamma \left( S\Gamma a\right) \right) \Gamma \left(
S\Gamma a\right) \\
&=&\left( \left( S\Gamma S\right) \Gamma \left( a\Gamma a\right) \right)
\Gamma \left( S\Gamma a\right) \subseteq \left( S\Gamma a^{2}\right) \Gamma
\left( S\Gamma S\right) =\left( S\Gamma a^{2}\right) \Gamma S\text{.}
\end{eqnarray*}

Hence $S$ is intra-regular.
\end{proof}

\begin{lemma}
\label{idl} If $I$ and $J$ are two-sided $\Gamma $-ideals of an
intra-regular $\Gamma $-AG$^{\ast \ast }$-groupoid $S$ , then $I\cap J$ is a
two-sided $\Gamma $-ideal of $S.$
\end{lemma}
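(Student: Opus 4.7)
The plan is to reduce this to two very short observations: non-emptiness of $I\cap J$ and closure of $I\cap J$ under left and right multiplication by elements of $S$ through $\Gamma$. Neither really exploits intra-regularity of $S$ in an essential way; what intra-regularity does (via Lemma \ref{LisR} and Theorem \ref{equalient}) is guarantee that we may treat a one-sided $\Gamma$-ideal as a two-sided one, so the proof proceeds uniformly.

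First I would establish that $I\cap J$ is non-empty. Picking any $i\in I$, $j\in J$ and any $\gamma\in\Gamma$, the element $i\gamma j$ lies in $I\Gamma S\subseteq I$ because $I$ is a right $\Gamma$-ideal, and simultaneously in $S\Gamma J\subseteq J$ because $J$ is a left $\Gamma$-ideal. Hence $i\gamma j\in I\cap J$, so $I\cap J\neq\emptyset$.

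Next I would verify the two-sided $\Gamma$-ideal property directly from the inclusions already satisfied by $I$ and $J$. Since $(I\cap J)\Gamma S\subseteq I\Gamma S\subseteq I$ and $(I\cap J)\Gamma S\subseteq J\Gamma S\subseteq J$, we obtain $(I\cap J)\Gamma S\subseteq I\cap J$, so $I\cap J$ is a right $\Gamma$-ideal. The argument for the left side is symmetric: $S\Gamma(I\cap J)\subseteq S\Gamma I\cap S\Gamma J\subseteq I\cap J$. Combining these two inclusions gives that $I\cap J$ is a two-sided $\Gamma$-ideal of $S$.

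There is no serious obstacle; the only place one has to be slightly careful is the non-emptiness step (ideals are by definition non-empty in this paper, and the product $i\gamma j$ must actually be shown to sit in both $I$ and $J$, which is why one uses the two-sidedness of $I$ and $J$ rather than just one-sidedness). The author signals this by writing \textquotedblleft It is simple,\textquotedblright{} so a brief proof of the above form is all that is needed.
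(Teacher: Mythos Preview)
Your proposal is correct and is precisely the standard, elementary argument the paper has in mind when it writes \textquotedblleft It is simple\textquotedblright; you verify non-emptiness and the left/right absorption properties directly from those of $I$ and $J$. There is nothing to contrast, since the paper omits the details entirely.
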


\begin{proof}
It is simple.
\end{proof}

\begin{lemma}
\label{ij}In an intra-regular $\Gamma $-AG$^{\ast \ast }$-groupoid $I\Gamma
J=I\cap J$, for every $\Gamma $-ideals $I$ and $J$ in $S$.
\end{lemma}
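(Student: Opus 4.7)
The plan is to prove the equality by showing both inclusions. The inclusion $I\Gamma J\subseteq I\cap J$ is routine: since $I$ is a right $\Gamma$-ideal, $I\Gamma J\subseteq I\Gamma S\subseteq I$, and since $J$ is a left $\Gamma$-ideal, $I\Gamma J\subseteq S\Gamma J\subseteq J$. So the real work is in the reverse containment.

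For $I\cap J\subseteq I\Gamma J$, I would take an arbitrary $a\in I\cap J$ and exploit intra-regularity to write $a=(x\beta(a\delta a))\gamma y$ for some $x,y\in S$ and $\beta,\gamma,\delta\in\Gamma$. The idea is to massage this expression using only two axioms so that $a$ ends up as the right-hand factor of a product whose left-hand factor visibly lies in $I$. Concretely, applying the $\Gamma$-AG$^{\ast\ast}$ law $(3)$ to the inner bracket gives $x\beta(a\delta a)=a\beta(x\delta a)$, hence $a=(a\beta(x\delta a))\gamma y$. A single use of the left invertive law $(1)$ then swaps $a$ and $y$, yielding
\begin{equation*}
a=(y\beta(x\delta a))\gamma a.
\end{equation*}

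Now I would finish by reading off memberships: since $a\in I$ and $I$ is a left $\Gamma$-ideal of $S$ (which it is by Lemma \ref{LisR}, as $I$ is two-sided), we have $x\delta a\in S\Gamma I\subseteq I$ and therefore $y\beta(x\delta a)\in S\Gamma I\subseteq I$. On the other hand $a\in J$, so the displayed equation exhibits $a$ as an element of $I\Gamma J$. Combining with the easy inclusion proves $I\Gamma J=I\cap J$.

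I do not expect any real obstacle here: the only choice to be made is how to bring $a$ to the outer right position, and the pair $(3)$-then-$(1)$ does that in two strokes. No appeal to medial or paramedial laws is needed, and one does not even have to invoke the full strength of Theorem \ref{equalient} — just the left-ideal property of $I$ supplied by Lemma \ref{LisR}.
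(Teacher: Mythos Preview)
Your argument is correct but follows a different route from the paper's. The paper proves the reverse inclusion structurally: it notes that $I\cap J$ is again a two-sided $\Gamma$-ideal (Lemma~\ref{idl}), hence a $\Gamma$-bi-ideal, and then invokes Theorem~\ref{ii} to conclude $(I\cap J)=(I\cap J)\Gamma(I\cap J)\subseteq I\Gamma J$. Your approach is instead a direct element-chase: you unpack intra-regularity for a single $a\in I\cap J$ and rewrite it via $(3)$ and $(1)$ into the form $(y\beta(x\delta a))\gamma a\in I\Gamma J$. Your method is more elementary and self-contained --- it avoids the dependency on Theorem~\ref{ii} (whose proof is considerably longer) --- while the paper's method is shorter at this point precisely because that earlier result has already been established. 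One small simplification: you do not need Lemma~\ref{LisR} to know that $I$ is a left $\Gamma$-ideal, since $I$ is given as a two-sided $\Gamma$-ideal and is therefore a left $\Gamma$-ideal by definition.
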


\begin{proof}
Let $I$ and $J$ be any $\Gamma $-ideals of $S$, then obviously $I\Gamma
J\subseteq I\cap J$. Since $I\cap J\subseteq I$ and $I\cap J\subseteq J$,
then $\left( I\cap J\right) ^{2}\subseteq I\Gamma J$, also by Lemma \ref{idl}%
, $I\cap J$ is a $\Gamma $-ideal of $S,$ so by Theorem $\ref{ii}$, we have $%
I\cap J=\left( I\cap J\right) ^{2}\subseteq I\Gamma J$. Hence $I\Gamma
J=I\cap J$.
\end{proof}

\begin{lemma}
\label{iffff}Let $S$ be a $\Gamma $-AG$^{\ast \ast }$-groupoid, then $S$ is
an intra-regular if and only if every left $\Gamma $-ideal of $S$ is $\Gamma 
$-idempotent.
\end{lemma}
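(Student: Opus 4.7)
The plan is to handle each direction by leveraging the ideal-theoretic equivalences already proved in the paper, rather than doing a fresh long calculation.

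For the forward direction, I would begin by assuming $S$ is intra-regular and letting $L$ be any left $\Gamma$-ideal. Theorem \ref{equalient} immediately upgrades $L$ to a two-sided $\Gamma$-ideal of $S$. Then Lemma \ref{ij}, applied with $I = J = L$, gives $L\Gamma L = L \cap L = L$, which is the $\Gamma$-idempotence we want. This half is essentially a two-step chain of citations and needs no new computation.

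For the backward direction, I would imitate the argument used in the proof of Theorem \ref{ii}, but with $S\Gamma a$ playing the role of the bi-ideal there. First, applying the hypothesis to the left $\Gamma$-ideal $S$ itself yields $S = S\Gamma S$. Next, taking $S\Gamma a$ as a principal left $\Gamma$-ideal containing $a$, by hypothesis $S\Gamma a = (S\Gamma a)\Gamma(S\Gamma a)$. Iterating the idempotence once more and applying the medial law (2) to the inner factor, I would compute
\[
S\Gamma a = ((S\Gamma a)\Gamma(S\Gamma a))\Gamma(S\Gamma a) = ((S\Gamma S)\Gamma(a\Gamma a))\Gamma(S\Gamma a) \subseteq (S\Gamma a^{2})\Gamma S,
\]
so that from $a \in S\Gamma a$ we conclude $a \in (S\Gamma a^{2})\Gamma S$, i.e., $S$ is intra-regular.

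The main obstacle is justifying the two underlying set-theoretic facts used in the backward direction: that $S\Gamma a$ is genuinely a left $\Gamma$-ideal of $S$ (equivalently, $S\Gamma(S\Gamma a) \subseteq S\Gamma a$) and that $a \in S\Gamma a$. Both are used implicitly in the analogous Theorem \ref{ii}, and both can be verified by exploiting the AG$^{**}$ identity (3) $s\alpha(t\beta a) = t\alpha(s\beta a)$ together with the fact $S = S\Gamma S$ obtained above; once these background facts are in hand, the backward direction collapses to the short chain of inclusions displayed above and no further manipulation with the left invertive or paramedial laws is required.
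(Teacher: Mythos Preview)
Your proposal is correct and follows essentially the same route as the paper. For the backward direction your computation with $S\Gamma a$ and the medial law is identical to the paper's; for the forward direction the paper cites Theorems \ref{equalient} and \ref{ii} directly (a left $\Gamma$-ideal is a $\Gamma$-bi-ideal, hence idempotent), whereas you go through Lemma \ref{ij} with $I=J=L$, but since Lemma \ref{ij} is itself proved via Theorem \ref{ii}, the two arguments are the same in substance.
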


\begin{proof}
Let $S$ be an intra-regular $\Gamma $-AG$^{\ast \ast }$-groupoid, then by
Theorems \ref{equalient} and $\ref{ii}$, every $\Gamma $-ideal of $S$ is $%
\Gamma $-idempotent.

Conversely, assume that every left $\Gamma $-ideal of $S$ is $\Gamma $%
-idempotent. Since $S\Gamma a$ is a left $\Gamma $-ideal of $S$, so by using 
$(2)$, we have%
\begin{eqnarray*}
a &\in &S\Gamma a=\left( S\Gamma a\right) \Gamma \left( S\Gamma a\right)
=\left( \left( S\Gamma a\right) \Gamma \left( S\Gamma a\right) \right)
\Gamma \left( S\Gamma a\right) \\
&=&\left( \left( S\Gamma S\right) \Gamma \left( a\Gamma a\right) \right)
\Gamma \left( S\Gamma a\right) \subseteq (S\Gamma a^{2})\Gamma (S\Gamma
S)=\left( S\Gamma a^{2}\right) \Gamma S\text{.}
\end{eqnarray*}

Hence $S$ is intra-regular.
\end{proof}

\begin{lemma}
In an AG$^{\ast \ast }$-groupoid $S$, the following conditions are
equivalent.
\end{lemma}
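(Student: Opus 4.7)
The plan is to reduce this lemma to the $\Gamma$-analogues already proved earlier in the paper. An ordinary AG$^{\ast \ast}$-groupoid is precisely the case of a $\Gamma$-AG$^{\ast \ast}$-groupoid in which $\Gamma$ is a singleton, so that every formal expression $x\gamma y$ collapses to the binary product $xy$. Under this identification, axioms $(1)$--$(4)$ become the left invertive, medial, AG$^{\ast \ast}$, and paramedial laws for the binary operation, and each $\Gamma$-ideal notion (left, right, two-sided, bi, generalized bi, quasi, interior, $(1,2)$) becomes the corresponding classical ideal notion for AG$^{\ast \ast}$-groupoids.

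Concretely, I would match the conditions listed in the lemma one-by-one against those of Theorem \ref{equalient}, Theorem \ref{ii}, and Lemmas \ref{LisR}, \ref{li}, \ref{iffff}, and then invoke each of those results with $\Gamma$ specialized to a singleton. This gives, in particular, the equivalences among left ideal, right ideal, two-sided ideal, quasi, bi, generalized bi, interior, and $(1,2)$-ideal in the intra-regular setting, together with the characterizations of intra-regularity via bi-ideal or left-ideal idempotency.

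For the direction that actually requires calculation — characterizing intra-regularity through idempotency of some class of ideals — the key step mirrors the arguments in Theorem \ref{ii} and Lemma \ref{iffff}. Applying the idempotency hypothesis to the principal left (or bi-) ideal $Sa$ and using the medial law yields
\begin{equation*}
a \in Sa = (Sa)(Sa) = ((Sa)(Sa))(Sa) = ((SS)(aa))(Sa) \subseteq (Sa^{2})S,
\end{equation*}
which exhibits $a$ as intra-regular. The main obstacle is merely formal: one must verify that each chain of identities in the $\Gamma$-proofs, performed for arbitrary $\alpha,\beta,\gamma,\delta \in \Gamma$, remains valid when those elements all coincide in a singleton $\Gamma$. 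Since none of the earlier proofs exploits distinctness of the $\Gamma$-parameters in an essential way, this transliteration is routine and no new combinatorial identity is required.
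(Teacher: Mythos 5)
There is a genuine mismatch between what you prove and what the lemma actually asserts. In the paper the two conditions attached to this lemma are: $(i)$ $S$ is intra-regular, and $(ii)$ $A=(S\Gamma A)^{2}$ for every left $\Gamma$-ideal $A$ of $S$; despite the wording ``AG$^{\ast\ast}$-groupoid,'' the statement is about the $\Gamma$-structure (condition $(ii)$ is phrased with $\Gamma$), so your strategy of specializing the earlier $\Gamma$-results to a singleton $\Gamma$ is a non sequitur here: no specialization is involved, and the list of conditions you guessed (left, right, two-sided, quasi, bi, interior, $(1,2)$-ideals coinciding) is the content of Theorem \ref{equalient}, not of this lemma. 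Nothing in your proposal engages with the set $(S\Gamma A)^{2}$ at all, so neither implication of the intended equivalence is established.

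What is actually needed is short and rests on Lemma \ref{iffff}. For $(i)\Rightarrow(ii)$: if $S$ is intra-regular and $A$ is a left $\Gamma$-ideal, then $S\Gamma A$ is again a left $\Gamma$-ideal (since $S\Gamma(S\Gamma A)\subseteq S\Gamma A$), so by Lemma \ref{iffff} it is $\Gamma$-idempotent, giving $(S\Gamma A)^{2}=S\Gamma A\subseteq A$; conversely $A=A\Gamma A\subseteq S\Gamma A=(S\Gamma A)^{2}$, whence $A=(S\Gamma A)^{2}$. For $(ii)\Rightarrow(i)$: from $A=(S\Gamma A)^{2}\subseteq A\Gamma A\subseteq A$ one gets that every left $\Gamma$-ideal is $\Gamma$-idempotent, and then Lemma \ref{iffff} (whose converse direction is exactly the computation $a\in S\Gamma a=(S\Gamma a)\Gamma(S\Gamma a)=((S\Gamma S)\Gamma(a\Gamma a))\Gamma(S\Gamma a)\subseteq(S\Gamma a^{2})\Gamma S$ that you do write down) yields intra-regularity. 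So the one calculation you include is relevant, but the bridging steps connecting the hypothesis $A=(S\Gamma A)^{2}$ to idempotency, and idempotency of $S\Gamma A$ to the equality in $(ii)$, are missing, and the singleton-$\Gamma$ reduction should be discarded.
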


$(i)$ $S$ is intra-regular.

$(ii)$ $A=(S\Gamma A)^{2},$ where $A$ is any left $\Gamma $-ideal of S.

\begin{proof}
$(i)$ $\Longrightarrow (ii):$ Let $A$ be a left $\Gamma $-ideal of an
intra-regular $\Gamma $-AG$^{\ast \ast }$-groupoid $S,$ then $S\Gamma
A\subseteq A$ and by Lemma \ref{iffff}, $(S\Gamma A)^{2}=S\Gamma A\subseteq
A.$ Now $A=A\Gamma A\subseteq S\Gamma A=(S\Gamma A)^{2},$ which implies that 
$A=(S\Gamma A)^{2}.$

$(ii)$ $\Longrightarrow (i):$ Let $A$ be a left $\Gamma $-ideal of $S,$ then 
$A=(S\Gamma A)^{2}\subseteq A\Gamma A,$ which implies that $A$ is $\Gamma $%
-idempotent and by using Lemma \ref{iffff}, $S$ is an intra-regular.
\end{proof}

\begin{theorem}
\label{RLT}For an intra-regular $\Gamma $-AG$^{\ast \ast }$-groupoid $S$,
the following statements holds.
\end{theorem}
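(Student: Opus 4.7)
The plan is to exploit the extensive infrastructure already developed earlier in the paper, since by Theorem \ref{equalient} the notions of left, right, two-sided, quasi, bi, generalized bi, $(1,2)$, and interior $\Gamma$-ideal all coincide in an intra-regular $\Gamma$-AG$^{\ast\ast}$-groupoid, and by Theorem \ref{ii} and Lemma \ref{iffff} every such ideal is $\Gamma$-idempotent. Consequently, any clause of Theorem \ref{RLT} that asserts an ``equals" between different ideal-types reduces immediately to a statement about a single class of subsets, and any clause asserting idempotency, or the identity $A\Gamma A = A$, is already available.

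For clauses involving products or intersections of ideals (for instance, statements of the form $R\Gamma L = R \cap L$, or $R \cap L$ being an ideal with specific closure properties), I would proceed as in the proof of Lemma \ref{ij}: the inclusion $I\Gamma J \subseteq I \cap J$ is immediate from the fact that $I$ is a right $\Gamma$-ideal and $J$ is a left $\Gamma$-ideal, while the reverse inclusion follows by noting $(I\cap J)^{2}\subseteq I\Gamma J$ and combining this with $\Gamma$-idempotency of $I\cap J$, which itself is a $\Gamma$-ideal by Lemma \ref{idl}. Clauses asserting that finite intersections are themselves ideals are covered directly by Lemma \ref{idl}, and the semilattice-type properties (commutativity and associativity of the product operation on ideals) follow by combining Lemma \ref{ij} with the set-theoretic behavior of intersection.

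For any remaining clause of the form $A = (S\Gamma A)\Gamma S$ or $A = (A\Gamma S)\Gamma A$ or $A = (A\Gamma S)\Gamma A^{2}$, the forward inclusion comes from the defining property of the relevant ideal type, while the reverse inclusion is established by the standard technique used throughout the paper: write $a = (x\beta (a\delta a))\gamma y$ using intra-regularity, then apply laws (1), (2), (3), (4) in a chain to bring $a$ into the desired expression. These rewrites follow the same pattern as the proofs of Theorems \ref{biiid}, \ref{plo}, \ref{bint}, and \ref{quo}.

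The main obstacle is purely organizational rather than conceptual: verifying which of the many equivalent ideal-product identities appears in each sub-statement, and matching each one to the particular earlier lemma or theorem that delivers it. Since the intra-regular representation and the identities (1)--(4) together trivially collapse all $\Gamma$-ideal notions into one $\Gamma$-idempotent class, no genuinely new calculation is expected; the work is assembling citations of Lemmas \ref{LisR}, \ref{jk}, \ref{li}, \ref{idl}, \ref{ij}, \ref{iffff} and Theorems \ref{ii}, \ref{equalient} in the correct sequence for each sub-clause.
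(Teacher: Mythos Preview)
Your proposal misidentifies the content of Theorem \ref{RLT}. The three clauses are not about ideal-product identities, idempotency, or equalities of the form $(A\Gamma S)\Gamma A=A$; they assert that every right, left, and two-sided $\Gamma$-ideal of an intra-regular $\Gamma$-AG$^{\ast\ast}$-groupoid is $\Gamma$-\emph{semiprime}. In the paper this is treated element-wise: one must show that $a\delta a\in R$ implies $a\in R$. None of the machinery you list (Lemmas \ref{idl}, \ref{ij}, \ref{iffff}, Theorem \ref{ii}, the semilattice behaviour of products) addresses this implication, and your plan as written never mentions semiprimality.

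The paper's proof is a direct computation: starting from $a=(x\beta(a\delta a))\gamma y$ and applying the identities (1), (3), (4) in a short chain, it rewrites $a$ as an element of $R\Gamma S$ (for the right-ideal case) or of $S\Gamma L$ (for the left-ideal case), whence $a$ lies in the ideal containing $a\delta a$. If you wanted to leverage the structural results you cite, a valid shortcut would be: intra-regularity gives $a\in (S\Gamma a^{2})\Gamma S$; since $a^{2}\in R$ and, by Lemma \ref{LisR}, $R$ is in fact two-sided, $(S\Gamma a^{2})\Gamma S\subseteq (S\Gamma R)\Gamma S\subseteq R$. That argument is sound and arguably cleaner than the paper's explicit rewrites, but it is not the argument you outlined; your proposal as stated is aimed at the wrong target.
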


$\left( i\right) $ Every right $\Gamma $-ideal of $S$ is $\Gamma $-semiprime.

$\left( ii\right) $ Every left $\Gamma $-ideal of $S$ is $\Gamma $-semiprime.

$\left( iii\right) $ Every two-sided $\Gamma $-ideal of $S$ is $\Gamma $%
-semiprime

\begin{proof}
$(i):$ Let $R$ be a right $\Gamma $-ideal of an intra-regular $\Gamma $-AG$%
^{\ast \ast }$-groupoid $S$. Let $a^{2}\in R$ and let $a\in S.$ Now since $S$
is an intra-regular so there exists $x,$ $y\in S$ and $\beta ,\gamma ,\delta
\in \Gamma $ such that $a=(x\beta (a\delta a))\gamma y$. Now by using $(3),$ 
$(1)$ and $(2),$ we have%
\begin{eqnarray*}
a &=&(x\beta (a\delta a))\gamma y=(a\beta (x\delta a))\gamma y=(y\beta
(x\delta a))\gamma a=(y\beta (x\delta a))\gamma ((x\beta (a\delta a))\gamma
y) \\
&=&(x\beta (a\delta a))\gamma ((y\beta (x\delta a))\gamma y)=(x\beta (y\beta
(x\delta a)))\gamma ((a\delta a)\gamma y) \\
&=&(a\delta a)\gamma ((x\beta (y\beta (x\delta a)))\gamma y)\in R\Gamma
(S\Gamma S)=R\Gamma S\subseteq R.
\end{eqnarray*}

Which shows that $R$ is $\Gamma $-semiprime.

$(ii):$ Let $L$ be a left $\Gamma $-ideal of $S.$ Let $a^{2}\in L$ and let $%
a\in S$ now since $S$ is an intra-regular so there exists $x,$ $y\in S$ and $%
\beta ,\gamma ,\delta \in \Gamma $ such that $a=(x\beta (a\delta a))\gamma
y, $ then by using $(3),$ $(1)$ and $(4),$ we have%
\begin{eqnarray*}
a &=&(x\beta (a\delta a))\gamma y=(a\beta (x\delta a))\gamma y=(y\beta
(x\delta a))\gamma a \\
&=&(y\beta (x\delta a))\gamma ((x\beta (a\delta a))\gamma y)=(x\beta
(a\delta a))\gamma ((y\beta (x\delta a))\gamma y) \\
&=&(y\beta (y\beta (x\delta a)))\gamma ((a\delta a)\gamma x)=(a\delta
a)\gamma ((y\beta (y\beta (x\delta a)))\gamma x) \\
&=&(x\delta (y\beta (y\beta (x\delta a))))\gamma (a\gamma a)\in S\Gamma
L\subseteq L.
\end{eqnarray*}

Which shows that $L$ is $\Gamma $-semiprime.

$(iii)$ is obvious.
\end{proof}

\begin{theorem}
In a $\Gamma $-AG$^{\ast \ast }$-groupoid $S$, the following statements are
equivalent.
\end{theorem}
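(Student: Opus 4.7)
Since the list of equivalent conditions is cut off from the excerpt, I read the theorem as fitting the template used repeatedly in the preceding material, namely an equivalence between intra-regularity of $S$ and various idempotence/semiprime/intersection characterizations of the several kinds of $\Gamma$-ideals introduced (left, right, two-sided, quasi, bi, generalized bi, $(1,2)$, interior). The plan is therefore to establish a single circular chain of implications that passes through all the listed conditions, leveraging the machinery already developed earlier in the paper so that no condition has to be proved from scratch.

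For the forward direction, starting from the assumption that $S$ is intra-regular, I will first invoke Theorem \ref{equalient} to collapse all the various notions of one-sided, two-sided, quasi, bi, generalized bi, $(1,2)$ and interior $\Gamma$-ideals into a single class, so that any statement phrased in terms of one of them automatically holds for the rest. Then Theorem \ref{ii} (bi-ideals are $\Gamma$-idempotent), Lemma \ref{iffff} (left $\Gamma$-ideals are $\Gamma$-idempotent), Lemma \ref{ij} ($I\Gamma J=I\cap J$ for $\Gamma$-ideals), and Theorem \ref{RLT} (every right/left/two-sided $\Gamma$-ideal is $\Gamma$-semiprime) provide, en bloc, the remaining properties that a statement of this form could plausibly list. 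The verification in each case will amount to a short citation rather than a computation.

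For the reverse direction, at least one of the listed equivalent conditions must be shown to force intra-regularity. The template for this is already present in the proof of Theorem \ref{ii} and in Lemma \ref{iffff}: one specializes the hypothesis to the principal-like left $\Gamma$-ideal $S\Gamma a$ (which is a $\Gamma$-bi-ideal, a left $\Gamma$-ideal, a quasi-$\Gamma$-ideal, etc., in an AG$^{\ast\ast}$-setting) and then expands using the medial law (2), together with (1), (3), (4), to obtain the chain
\begin{equation*}
a \in S\Gamma a = (S\Gamma a)\Gamma(S\Gamma a) = ((S\Gamma S)\Gamma(a\Gamma a))\Gamma(S\Gamma a) \subseteq (S\Gamma a^{2})\Gamma S.
\end{equation*}
If the hypothesis is instead phrased as semiprimeness of some kind of $\Gamma$-ideal, I would take the principal left $\Gamma$-ideal generated by $a^{2}$ and use Theorem \ref{RLT}-style manipulations to pull $a$ back into $(S\Gamma a^{2})\Gamma S$.

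The main obstacle, as in the rest of the paper, is purely calculational: rewriting products of the form $(S\Gamma a)\Gamma(S\Gamma a)$ or $(S\Gamma a^{2})\Gamma(S\Gamma S)$ into the canonical intra-regular form $(S\Gamma a^{2})\Gamma S$ by repeated application of the left invertive, medial, AG$^{\ast\ast}$ and paramedial laws. Since this precise computation has already been carried out in Theorem \ref{ii} and Lemma \ref{iffff}, the novelty here lies only in matching whichever new ideal-class condition appears among the equivalences to the appropriate principal-like test set, and then quoting the relevant earlier result.
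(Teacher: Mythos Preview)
The specific theorem here has only two conditions: $(i)$ $S$ is intra-regular, and $(ii)$ every right $\Gamma$-ideal of $S$ is $\Gamma$-semiprime. Your template matches the paper's argument: $(i)\Rightarrow(ii)$ is exactly your citation of Theorem~\ref{RLT}, and for $(ii)\Rightarrow(i)$ the paper follows your second reverse-direction plan, applying the semiprime hypothesis to the principal-type right $\Gamma$-ideal $a^{2}\Gamma S$ to obtain $a\in a^{2}\Gamma S$, and then rewriting via laws $(3)$ and $(2)$ to reach $a\in (S\Gamma a^{2})\Gamma S$. The one slip in your outline is that you wrote ``principal left $\Gamma$-ideal generated by $a^{2}$''; since the hypothesis is about \emph{right} $\Gamma$-ideals, the test object must be $a^{2}\Gamma S$ rather than $S\Gamma a^{2}$, but otherwise the route is identical.
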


$\left( i\right) $ $S$ is intra-regular.

$(ii)$ Every right $\Gamma $-ideal of $S$ is $\Gamma $-semiprime.

\begin{proof}
$(i)$ $\Longrightarrow (ii)$ is obvious by Theorem \ref{RLT}.

$(ii)$ $\Longrightarrow (i):$ Let $S$ be an intra-regular $\Gamma $-AG$%
^{\ast \ast }$-groupoid. Let $R$ be any right $\Gamma $-ideal of $S$ such
that $R$ is $\Gamma $-semiprime. Since $a^{2}\Gamma S$ is a right $\Gamma $%
-ideal of $S$, therefore $a^{2}\Gamma S$ is $\Gamma $-semiprime. Now clearly 
$a^{2}\in a^{2}\Gamma S$ so $a\in a^{2}\Gamma S.$ Now let $\delta \in S$,\
then by using $(3)$ and $(2),$ we have%
\begin{eqnarray*}
a &\in &(a\delta a)\Gamma S=(a\delta a)\Gamma (S\Gamma S)=S\Gamma ((a\delta
a)\Gamma S)=(S\Gamma S)\Gamma ((a\delta a)\Gamma S) \\
&=&(S\Gamma (a\delta a))\Gamma (S\Gamma S)=(S\Gamma (a\delta a))\Gamma S.
\end{eqnarray*}

Which shows that $S$ is an intra regular.
\end{proof}

\begin{theorem}
\label{intra if rintl}A $\Gamma $-AG$^{\ast \ast }$-groupoid $S$ is
intra-regular if and only if $R\cap L=R\Gamma L$, for every $\Gamma $%
-semiprime right $\Gamma $-ideal $R$ and every left $\Gamma $-ideal $L$ of $%
S $.
\end{theorem}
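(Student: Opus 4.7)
The forward direction is a direct consequence of earlier results. Suppose $S$ is intra-regular and fix a $\Gamma$-semiprime right $\Gamma$-ideal $R$ and a left $\Gamma$-ideal $L$ of $S$. By Theorem \ref{equalient}, every one-sided $\Gamma$-ideal of $S$ is already two-sided, so both $R$ and $L$ are two-sided $\Gamma$-ideals, and Lemma \ref{ij} then gives $R\Gamma L = R\cap L$ directly. (Theorem \ref{RLT} moreover shows that in the intra-regular setting the $\Gamma$-semiprime hypothesis on $R$ is automatic, so the condition is compatible with the conclusion.)

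For the converse, assume the identity $R\cap L = R\Gamma L$ holds for every $\Gamma$-semiprime right $\Gamma$-ideal $R$ and every left $\Gamma$-ideal $L$. The plan is to produce, for each $a\in S$, an intra-regular representation $a\in(S\Gamma a^{2})\Gamma S$. Specializing the hypothesis yields three preliminary facts. Taking $R=L=S$ (trivially a $\Gamma$-semiprime right $\Gamma$-ideal of itself and a left $\Gamma$-ideal) gives $S=S\Gamma S$. Taking $R=S$ together with $L=\{a\}\cup S\Gamma a$, the principal left $\Gamma$-ideal generated by $a$, and simplifying via (3), yields $a\in S\Gamma a$. The analogous choice $L=\{a^{2}\}\cup S\Gamma a^{2}$ yields $a^{2}\in S\Gamma a^{2}$. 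From $S=S\Gamma S$ and the paramedial law (4) one then checks that $S\Gamma a^{2}=a^{2}\Gamma S$, so this common set is simultaneously a left and a right $\Gamma$-ideal; using $a^{2}\in S\Gamma a^{2}$ together with (1) a short calculation further verifies that $(S\Gamma a^{2})\Gamma S=S\Gamma a^{2}$.

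Granting that $S\Gamma a^{2}$ is $\Gamma$-semiprime, the hypothesis applied with $R=S\Gamma a^{2}$ (which contains $a^{2}$ and hence, by semiprimeness, contains $a$) and $L=S\Gamma a$ gives $a\in R\cap L=R\Gamma L\subseteq S\Gamma a^{2}=(S\Gamma a^{2})\Gamma S$, exhibiting $a$ as intra-regular. The main obstacle is precisely the $\Gamma$-semiprimeness of $S\Gamma a^{2}$: the implication $b^{2}\in S\Gamma a^{2}\Rightarrow b\in S\Gamma a^{2}$ is not a direct consequence of the AG$^{\ast\ast}$ identities. I expect to resolve it either by a further bootstrapping use of the hypothesis (comparing $S\Gamma a^{2}$ with the intersection of all $\Gamma$-semiprime right $\Gamma$-ideals containing $a^{2}$, which is itself $\Gamma$-semiprime), or by showing instead that the hypothesis forces every left $\Gamma$-ideal of $S$ to be $\Gamma$-idempotent, in which case Lemma \ref{iffff} delivers intra-regularity directly and bypasses the explicit semiprime verification.
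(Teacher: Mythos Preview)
Your approach coincides with the paper's in both directions. For the forward implication you invoke Theorem~\ref{equalient} and Lemma~\ref{ij}; the paper uses Lemma~\ref{LisR} and Lemma~\ref{ij} to the same effect.

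For the converse, the paper likewise takes $R = a^{2}\Gamma S$ and $L = S\Gamma a$ and applies the hypothesis. You are actually more careful than the paper on the preliminaries: the paper simply writes ``Since $a^{2}\in a^{2}\Gamma S$'' and ``$a\in S\Gamma a$'' with no argument, whereas you extract $S = S\Gamma S$ and $a\in S\Gamma a$ from the hypothesis itself by specializing to $R=L=S$ and then $R=S$, $L=\{a\}\cup S\Gamma a$.

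The gap you flag --- that $a^{2}\Gamma S$ (equivalently $S\Gamma a^{2}$) must be shown to be $\Gamma$-semiprime before the hypothesis can be invoked for it --- is genuine, and the paper does \emph{not} fill it. The paper's sentence reads ``Since $a^{2}\in a^{2}\Gamma S$, which is a right $\Gamma$-ideal of $S$ so is $\Gamma$-semiprime which implies that $a\in a^{2}\Gamma S$,'' offering no justification; this cannot be an appeal to Theorem~\ref{RLT}, since that result already assumes intra-regularity. Thus your proposal is incomplete at exactly the point where the paper's own argument is incomplete: the paper simply asserts the semiprimeness, while you correctly single it out as the missing step. Neither of your two suggested repairs is carried through, and the paper supplies no alternative route either.
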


\begin{proof}
Let $S$ be an intra-regular $\Gamma $-AG$^{\ast \ast }$-groupoid and $R$ and 
$L$ be right and left $\Gamma $-ideal of $S$ respectively, then by Theorem %
\ref{LisR}, $R$ and $L$ become $\Gamma $-ideals of $S$, therefore by Lemma %
\ref{ij}, $R\cap L=R\Gamma L,$ for every $\Gamma $-ideal $R$ and $L,$ also
by Theorem \ref{RLT}, $R$ is $\Gamma $-semiprime.

Conversely, assume that $R\cap L=R\Gamma L$ for every right $\Gamma $-ideal $%
R,$ which is $\Gamma $-semiprime and every left $\Gamma $-ideal $L$ of $S$.
Since $a^{2}\in a^{2}\Gamma S$, which is a right $\Gamma $-ideal of $S$ so
is $\Gamma $-semiprime which implies that $a\in a^{2}\Gamma S$. Now clearly $%
S\Gamma a$ is a left $\Gamma $-ideal of $S$ and $a\in S\Gamma a,$ therefore
let $\gamma \in \Gamma ,$ then by using $(4)$, $(1)$ and $(2),$ we have%
\begin{eqnarray*}
a &\in &\left( (a\gamma a)\Gamma S\right) \cap \left( S\Gamma a\right)
=(\left( a\gamma a)\Gamma S\right) \Gamma \left( S\Gamma a\right) \subseteq
\left( (a\gamma a)\Gamma S\right) \Gamma \left( S\Gamma S\right) \\
&=&\left( (a\gamma a)\Gamma S\right) \Gamma S=\left( \left( a\gamma a\right)
\Gamma S\right) \Gamma S=(S\Gamma S)\Gamma (a\gamma a) \\
&=&\left( S\Gamma a\right) \Gamma \left( S\Gamma a\right) =S\Gamma ((S\Gamma
a)\Gamma a)=\left( S\Gamma \left( a\gamma a\right) \right) \Gamma S\text{.}
\end{eqnarray*}

Therefore $S$ is an intra-regular.
\end{proof}

\begin{theorem}
For a $\Gamma $-AG$^{\ast \ast }$-groupoid $S$ , the following statements
are equivalent.

$\left( i\right) $ $S$ is intra-regular.

$\left( ii\right) $ $L\cap R\subseteq L\Gamma R$, for every right $\Gamma $%
-ideal $R,$ which is $\Gamma $-semiprime and every left $\Gamma $-ideal $L$
of $S$.

$\left( iii\right) $ $L\cap R\subseteq \left( LR\right) L$, for every $%
\Gamma $-semiprime right $\Gamma $-ideal $R$ and every left $\Gamma $-ideal $%
L$.
\end{theorem}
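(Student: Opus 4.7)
The strategy is to establish the three conditions cyclically: $(i)\Rightarrow (ii)\Rightarrow (i)$ and $(i)\Rightarrow (iii)\Rightarrow (i)$, so that both (ii) and (iii) route through intra-regularity. The pair $(i)\Leftrightarrow (ii)$ comes nearly for free. For $(i)\Rightarrow (ii)$, intra-regularity activates Lemma \ref{LisR} so that $L$ and $R$ are both two-sided $\Gamma$-ideals, Lemma \ref{ij} then gives $L\cap R = L\Gamma R$, and Theorem \ref{RLT} guarantees $R$ is $\Gamma$-semiprime. For $(ii)\Rightarrow (i)$ I would apply (ii) to $L=S\Gamma a$ (a left $\Gamma$-ideal containing $a$) and $R=a^{2}\Gamma S$ (a right $\Gamma$-ideal which, being $\Gamma$-semiprime by assumption together with $a^{2}\in R$, contains $a$): condition (ii) gives $a\in (S\Gamma a)\Gamma(a^{2}\Gamma S)$, and a single application of the medial law (2) yields $a\in (S\Gamma a^{2})\Gamma(a\Gamma S)\subseteq (S\Gamma a^{2})\Gamma S$, which is intra-regularity of $a$.

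For $(i)\Rightarrow (iii)$, take $a\in L\cap R$. In the intra-regular setting, Lemma \ref{LisR} makes $L$ and $R$ two-sided, and Lemma \ref{idl} then makes $L\cap R$ a two-sided $\Gamma$-ideal, in particular a $\Gamma$-bi-ideal. Theorem \ref{biiid} thus places $a$ in $((L\cap R)\Gamma S)\Gamma (L\cap R)$. Two-sidedness of $L\cap R$ gives $(L\cap R)\Gamma S\subseteq L\cap R$, and Lemma \ref{ij} identifies $L\cap R=L\Gamma R$; combined with the trivial inclusion $L\cap R\subseteq L$, we conclude $a\in (L\Gamma R)\Gamma L$.

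For $(iii)\Rightarrow (i)$ I take the same candidates $L=S\Gamma a$ and $R=a^{2}\Gamma S$, noting that both contain $a$ (the $R$-membership again uses the $\Gamma$-semiprime property). Condition (iii) then gives $a\in ((S\Gamma a)\Gamma(a^{2}\Gamma S))\Gamma (S\Gamma a)$. My plan is to begin with the medial law (2) on the inner bracket to rewrite $(S\Gamma a)\Gamma(a^{2}\Gamma S)=(S\Gamma a^{2})\Gamma(a\Gamma S)$, isolating the $a^{2}$-factor on the left, then use the left invertive law (1) and the paramedial law (4) to collapse the surrounding $(a\Gamma S)\Gamma(S\Gamma a)$ piece into a single $S$-factor on the right, while keeping the $(S\Gamma a^{2})$ subword intact as the leading factor, arriving at $a\in (S\Gamma a^{2})\Gamma S$.

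The main obstacle is this last reduction in $(iii)\Rightarrow (i)$: it is a four-fold nested product that must be brought into the precise triple-product shape $(S\Gamma a^{2})\Gamma S$ of the definition of intra-regularity, and each rewrite has to be chosen to preserve the inner $a^{2}$ as a single subword. The temptation to shortcut via $S=S\Gamma S$ (Lemma \ref{jk}) must be resisted throughout, because that lemma is itself a consequence of intra-regularity, the very property we are in the middle of deriving.
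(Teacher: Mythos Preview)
Your overall logical scheme is sound, but it diverges from the paper's in an instructive way. The paper argues a single cycle $(i)\Rightarrow(iii)\Rightarrow(ii)\Rightarrow(i)$, whereas you run two separate biconditionals through $(i)$. The cost of your scheme is that you must confront $(iii)\Rightarrow(i)$ head-on --- the step you correctly identify as ``the main obstacle'' --- while the paper never touches it: instead it proves $(iii)\Rightarrow(ii)$ by the short ideal-theoretic chain
\[
L\cap R\subseteq (L\Gamma R)\Gamma L\subseteq (L\Gamma R)\Gamma S=(L\Gamma R)\Gamma(S\Gamma S)=(S\Gamma S)\Gamma(R\Gamma L)=R\Gamma((S\Gamma S)\Gamma L)\subseteq\cdots\subseteq L\Gamma R,
\]
and then reuses the easy $(ii)\Rightarrow(i)$ that you already have. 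Conversely, your $(i)\Rightarrow(iii)$ via Lemmas~\ref{LisR}, \ref{idl}, \ref{ij} and Theorem~\ref{biiid} is more conceptual than the paper's explicit element chase for that implication.

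One caution about your sketch for $(iii)\Rightarrow(i)$: after the medial rewrite you land in $\bigl((S\Gamma a^{2})\Gamma(a\Gamma S)\bigr)\Gamma(S\Gamma a)$, not in $(S\Gamma a^{2})\Gamma\bigl((a\Gamma S)\Gamma(S\Gamma a)\bigr)$ as your phrase ``collapse the surrounding $(a\Gamma S)\Gamma(S\Gamma a)$ piece'' suggests. The leading $(S\Gamma a^{2})$ is buried two brackets deep, and the laws $(1)$, $(4)$ will move it around rather than leave it fixed on the left. The reduction to $(S\Gamma a^{2})\Gamma S$ can still be carried out, but it needs a substitution step (re-expanding one occurrence of $a$ using the relation already obtained) rather than a single pass of $(1)$ and $(4)$; your outline underestimates this. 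Given that you already have $(ii)\Rightarrow(i)$, it is cheaper to imitate the paper and insert a $(iii)\Rightarrow(ii)$ link --- though note, in keeping with your own warning, that the paper's displayed chain tacitly invokes $S=S\Gamma S$ at the step $(L\Gamma R)\Gamma S=(L\Gamma R)\Gamma(S\Gamma S)$, so if you adopt it you will want either to justify that equality independently or rewrite the chain to avoid it.
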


\begin{proof}
$\left( i\right) \Rightarrow \left( iii\right) :$ Let $S$ be an
intra-regular $\Gamma $-AG$^{\ast \ast }$-groupoid and $L,$ $R$ be any left
and right $\Gamma $-ideals of $S$ and let $k\in L\cap R,$ which implies that 
$k\in L$ and $k\in R$. Since $S$ is intra-regular so there exist $x$, $y$ in 
$S$, and $\alpha ,\beta ,\gamma \in \Gamma $ such that $k=\left( x\alpha
(k\gamma k)\right) \beta y$, then by using $(3)$, $(1)$ and $(4)$, we have%
\begin{eqnarray*}
k &=&\left( x\alpha \left( k\gamma k\right) \right) \beta y=\left( k\alpha
\left( x\gamma k\right) \right) \beta y=\left( y\alpha \left( x\gamma
k\right) \right) \beta k \\
&=&\left( y\alpha \left( x\gamma \left( \left( x\alpha (k\gamma k)\right)
\beta y\right) \right) \right) \beta k=\left( y\alpha \left( \left( x\alpha
(k\gamma k)\right) \gamma \left( x\beta y\right) \right) \right) \beta k \\
&=&\left( \left( x\alpha \left( k\gamma k\right) \right) \alpha \left(
y\gamma \left( x\beta y\right) \right) \right) \beta k=\left( \left( k\alpha
\left( x\gamma k\right) \right) \alpha \left( y\gamma \left( x\beta y\right)
\right) \right) \beta k \\
&\in &\left( (R\Gamma \left( S\Gamma L\right) )\Gamma S\right) \Gamma
L\subseteq \left( \left( R\Gamma L\right) \Gamma S\right) \Gamma L=\left(
L\Gamma S\right) \Gamma \left( R\Gamma L\right) \\
&=&\left( L\Gamma R\right) \Gamma \left( S\Gamma L\right) \subseteq \left(
L\Gamma R\right) \Gamma L\text{.}
\end{eqnarray*}

which implies that $L\cap R\subseteq \left( L\Gamma R\right) \Gamma L$. Also
by Theorem \ref{RLT}, $L$ is $\Gamma $-semiprime.

$\left( iii\right) \Rightarrow \left( ii\right) :$ Let $R$ and $L$ be any
left and right $\Gamma $-ideals of $S$ and $R$ is $\Gamma $-semiprime, then
by assumption $\left( iii\right) $ and by using $(4)$, $(3)$ and $(1)$, we
have%
\begin{eqnarray*}
R\cap L &\subseteq &\left( R\Gamma L\right) \Gamma R\subseteq \left( R\Gamma
L\right) \Gamma S=\left( R\Gamma L\right) \Gamma \left( S\Gamma S\right)
=\left( S\Gamma S\right) \Gamma \left( L\Gamma R\right) \\
&=&L\Gamma \left( \left( S\Gamma S\right) \Gamma R\right) =L\Gamma \left(
\left( R\Gamma S\right) \Gamma S\right) \subseteq L\Gamma \left( R\Gamma
S\right) \subseteq L\Gamma R\text{.}
\end{eqnarray*}

$\left( ii\right) \Rightarrow \left( i\right) :$ Since $a\in S\Gamma a,$
which is a left $\Gamma $-ideal of $S$, and $a^{2}\in a^{2}\Gamma S$, which
is a $\Gamma $-semiprime right $\Gamma $-ideal of $S$, therefore, $a\in
a^{2}\Gamma S$. Now by using $(4),$ we have%
\begin{eqnarray*}
a &\in &\left( S\Gamma a\right) \cap \left( a^{2}\Gamma S\right) \subseteq
\left( S\Gamma a\right) \Gamma (a^{2}\Gamma S)\subseteq \left( S\Gamma
S\right) \Gamma \left( a^{2}\Gamma S\right) \\
&=&\left( S\Gamma a^{2}\right) \Gamma \left( S\Gamma S\right) =\left(
S\Gamma a^{2}\right) \Gamma S\text{.}
\end{eqnarray*}

Hence $S$ is intra-regular.
\end{proof}

A $\Gamma $-AG$^{\ast \ast }$-groupoid $S$ is called $\Gamma $-totally
ordered under inclusion if $P$ and $Q$ are any $\Gamma $-ideals of $S$ such
that either $P\subseteq Q$ or $Q\subseteq P$.

A $\Gamma $-ideal $P$ of a $\Gamma $-AG$^{\ast \ast }$-groupoid $S$ is
called $\Gamma $-strongly irreducible if $A\cap B\subseteq P$ implies either 
$A\subseteq P$ or $B\subseteq P$, for all $\Gamma $-ideals $A$, $B$ and $P$
of $S$.

\begin{lemma}
Every $\Gamma $-ideal of an intra-regular $\Gamma $-AG$^{\ast \ast }$%
-groupoid $S$ is $\Gamma $-prime if and only if it is $\Gamma $-strongly
irreducible.
\end{lemma}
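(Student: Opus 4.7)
The plan is to deduce the equivalence directly from Lemma \ref{ij}, which asserts that $I\Gamma J = I\cap J$ for any two $\Gamma$-ideals $I,J$ of an intra-regular $\Gamma$-AG$^{\ast\ast}$-groupoid. Since this identity identifies the product of two $\Gamma$-ideals with their intersection, the hypothesis appearing in the definition of $\Gamma$-prime becomes interchangeable with the hypothesis appearing in the definition of $\Gamma$-strongly irreducible, making the equivalence essentially immediate.

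For the forward direction, I would fix a $\Gamma$-ideal $P$ that is $\Gamma$-prime and suppose $A\cap B \subseteq P$ for two $\Gamma$-ideals $A,B$ of $S$. Applying Lemma \ref{ij} I can rewrite $A\cap B = A\Gamma B$, so that $A\Gamma B \subseteq P$. Primeness of $P$ then yields $A\subseteq P$ or $B\subseteq P$, which is exactly $\Gamma$-strong irreducibility.

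For the converse, I would fix a $\Gamma$-strongly irreducible $\Gamma$-ideal $P$ and suppose $A\Gamma B \subseteq P$ for two $\Gamma$-ideals $A,B$ of $S$. Again Lemma \ref{ij} gives $A\Gamma B = A\cap B$, hence $A\cap B \subseteq P$, and strong irreducibility delivers $A\subseteq P$ or $B\subseteq P$, showing $P$ is $\Gamma$-prime.

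The only real step is invoking Lemma \ref{ij} in both directions, so there is no genuine obstacle: all the combinatorial work has already been absorbed into that lemma (which in turn rests on Theorem \ref{ii} together with the basic closure properties of $\Gamma$-ideals). The only minor point worth flagging is the quantification, since the statement is most naturally read as a per-ideal equivalence; both directions proceed uniformly by the same substitution $A\Gamma B \longleftrightarrow A\cap B$.
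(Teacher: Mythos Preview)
Your proposal is correct and matches the paper's own approach: the paper simply states that the lemma ``is an easy consequence of Lemma \ref{ij},'' and your argument is precisely the natural unpacking of that remark, using $A\Gamma B = A\cap B$ to interchange the hypotheses in the definitions of $\Gamma$-prime and $\Gamma$-strongly irreducible.
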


\begin{proof}
It is an easy consequence of Lemma \ref{ij}.
\end{proof}

\begin{theorem}
Every $\Gamma $-ideal of an intra-regular $\Gamma $-AG$^{\ast \ast }$%
-groupoid $S$ is $\Gamma $-prime if and only if $S$ is $\Gamma $-totally
ordered under inclusion.
\end{theorem}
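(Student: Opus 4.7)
The plan is to prove both directions using Lemma \ref{ij}, which tells us that $I\Gamma J = I\cap J$ for every pair of $\Gamma$-ideals $I,J$ in an intra-regular $\Gamma$-AG$^{\ast\ast}$-groupoid. This identity is what lets the notions of prime ideal, strongly irreducible ideal, and total ordering collapse into one another.

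For the forward direction, I would assume every $\Gamma$-ideal of $S$ is $\Gamma$-prime and take two arbitrary $\Gamma$-ideals $P,Q$ of $S$. By Lemma \ref{idl}, $P\cap Q$ is itself a $\Gamma$-ideal, hence $\Gamma$-prime by hypothesis. Applying Lemma \ref{ij} gives $P\Gamma Q = P\cap Q \subseteq P\cap Q$, so primeness of $P\cap Q$ yields $P\subseteq P\cap Q$ or $Q\subseteq P\cap Q$, that is, $P\subseteq Q$ or $Q\subseteq P$. This shows $S$ is $\Gamma$-totally ordered under inclusion.

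For the converse, assume $S$ is $\Gamma$-totally ordered under inclusion, and let $P$ be any $\Gamma$-ideal of $S$. To check $\Gamma$-primeness, take $\Gamma$-ideals $A,B$ with $A\Gamma B\subseteq P$. By Lemma \ref{ij} again, $A\cap B = A\Gamma B\subseteq P$. By the total ordering, either $A\subseteq B$ or $B\subseteq A$; in the first case $A = A\cap B\subseteq P$, and in the second $B = A\cap B\subseteq P$. Hence $P$ is $\Gamma$-prime.

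There is no real obstacle here; once Lemma \ref{ij} is available, the argument reduces to elementary set containment manipulations. The only thing to be careful about is that $P\cap Q$ is a genuine $\Gamma$-ideal so that $\Gamma$-primeness may be applied to it, which is exactly the content of Lemma \ref{idl}. One could equivalently invoke the preceding lemma equating $\Gamma$-prime with $\Gamma$-strongly irreducible and then verify strong irreducibility directly, but the direct route through Lemma \ref{ij} is shorter.
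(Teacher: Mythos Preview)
Your proof is correct and follows essentially the same approach as the paper's: the forward direction is identical, using Lemma \ref{idl} to know $P\cap Q$ is an ideal and Lemma \ref{ij} to rewrite $P\Gamma Q$ as $P\cap Q$ before applying primeness. In the converse the paper argues via idempotence (from $I\subseteq J$ it writes $I = I\Gamma I \subseteq I\Gamma J \subseteq P$) rather than via your identity $A\cap B = A\Gamma B$, but this is a cosmetic variation of the same idea.
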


\begin{proof}
Assume that every $\Gamma $-ideal of $S$ is $\Gamma $-prime. Let $P$ and $Q$
be any $\Gamma $-ideals of $S$, so by Lemma $\ref{ij}$, $P\Gamma Q=P\cap Q$,
and by Lemma \ref{idl}, $P\cap Q$ is a $\Gamma $-ideal of $S$, so is prime,
therefore $P\Gamma Q\subseteq P\cap Q,$ which implies that $P\subseteq P\cap
Q$ or $Q\subseteq P\cap Q,$ which implies that $P\subseteq Q$ or $Q\subseteq
P$. Hence $S$ is $\Gamma $-totally ordered under inclusion.

Conversely, assume that $S$ is $\Gamma $-totally ordered under inclusion.
Let $I$, $J$ and $P$ be any $\Gamma $-ideals of $S$ such that $I\Gamma
J\subseteq P$. Now without loss of generality assume that $I\subseteq J$
then 
\begin{equation*}
I=I\Gamma I\subseteq I\Gamma J\subseteq P\text{.}
\end{equation*}%
Therefore either $I\subseteq P$ or $J\subseteq P$, which implies that $P$ is 
$\Gamma $-prime.
\end{proof}

\begin{theorem}
The set of all $\Gamma $-ideals $I_{s}$ of an intra-regular $\Gamma $-AG$%
^{\ast \ast }$-groupoid $S$, forms a $\Gamma $-semilattice structure.
\end{theorem}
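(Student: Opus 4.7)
The plan is to equip the set $I_s$ with the binary operation of $\Gamma$-multiplication, i.e.\ $I\cdot J := I\Gamma J$, and verify that this operation turns $I_s$ into a semilattice. By definition, a semilattice is a commutative, associative, idempotent binary structure, so I will need to check closure together with these three algebraic laws. The crucial observation that makes the whole thing short is that, on $I_s$, the operation $\Gamma$-multiplication coincides with set-theoretic intersection.

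First I would invoke Lemma \ref{ij}, which states that $I\Gamma J = I\cap J$ for any two $\Gamma$-ideals $I,J$ of the intra-regular $\Gamma$-AG$^{\ast\ast}$-groupoid $S$. This single identity is the workhorse of the proof, because the set-theoretic intersection is automatically commutative, associative and idempotent. Closure of the operation on $I_s$ is then a consequence of Lemma \ref{idl}, which guarantees that $I\cap J$ is itself a two-sided $\Gamma$-ideal of $S$, so $I\cdot J \in I_s$ whenever $I,J\in I_s$.

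Next, for the three algebraic laws: commutativity $I\Gamma J = J\Gamma I$ follows from $I\cap J = J\cap I$; associativity $(I\Gamma J)\Gamma K = I\Gamma(J\Gamma K)$ follows from $(I\cap J)\cap K = I\cap(J\cap K)$, each equality being obtained by two applications of Lemma \ref{ij}; and idempotency $I\Gamma I = I$ follows either from $I\cap I = I$ via the same lemma, or independently from Theorem \ref{ii} (since every $\Gamma$-ideal is in particular a $\Gamma$-bi-ideal, hence $\Gamma$-idempotent in an intra-regular $\Gamma$-AG$^{\ast\ast}$-groupoid).

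There is essentially no obstacle here: all the substantial work has already been carried out in Lemma \ref{idl}, Lemma \ref{ij} and Theorem \ref{ii}. The only care I would take is to state the operation explicitly, to point out that $I_s$ is non-empty (e.g.\ $S\in I_s$), and to be precise that the semilattice axioms need to be verified on ordered pairs of elements of $I_s$, which is justified exactly by the closure provided by Lemma \ref{idl}. Accordingly, the write-up should be only a few lines long, amounting to little more than the remark \emph{``It follows from Lemmas \ref{idl}, \ref{ij} and Theorem \ref{ii}.''}
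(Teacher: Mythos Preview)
Your proposal is correct and follows essentially the same route as the paper: both rely on Lemma~\ref{ij} to identify $I\Gamma J$ with $I\cap J$, deduce commutativity and associativity from the corresponding properties of intersection, and invoke Theorem~\ref{ii} for idempotency. The only minor difference is in the closure step: the paper verifies directly, via the medial law, that $A\Gamma B$ is a two-sided $\Gamma$-ideal, whereas you obtain closure by combining Lemma~\ref{idl} (that $I\cap J$ is an ideal) with Lemma~\ref{ij}; both arguments are valid and equally short.
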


\begin{proof}
Let $A$, $B\in I_{s}$, since $A$ and $B$ are $\Gamma $-ideals of $S$, then
by using $(2),$ we have%
\begin{eqnarray*}
(A\Gamma B)\Gamma S &=&\left( A\Gamma B\right) \Gamma \left( S\Gamma
S\right) =\left( A\Gamma S\right) \Gamma \left( B\Gamma S\right) \subseteq
A\Gamma B\text{. } \\
\text{Also }S\Gamma (A\Gamma B) &=&\left( S\Gamma S\right) \Gamma \left(
A\Gamma B\right) =\left( S\Gamma A\right) \Gamma (S\Gamma B)\subseteq
A\Gamma B\text{.}
\end{eqnarray*}%
Thus $A\Gamma B$ is a $\Gamma $-ideal of $S$. Hence $I_{s}$ is closed. Also
using Lemma \ref{ij}, we have, $A\Gamma B=A\cap B=B\cap A=B\Gamma A$, which
implies that $I_{s}$ is commutative, so is associative. Now by using Theorem %
\ref{ii}, $A\Gamma A=A$, for all $A\in I_{s}$. Hence $I_{s}$ is $\Gamma $%
-semilattice.
\end{proof}

\begin{theorem}
A two-sided $\Gamma $-ideal of an intra-regular $\Gamma $-AG$^{\ast \ast }$%
-groupoid $S$ is minimal if and only if it is the intersection of two
minimal two-sided $\Gamma $-ideals.

\begin{proof}
Let $S$ be an intra-regular $\Gamma $-AG$^{\ast \ast }$-groupoid and $Q$ be
a minimal two-sided $\Gamma $-ideal of $S$, let $a\in Q$. As $S\Gamma
(S\Gamma a)\subseteq S\Gamma a$ and $S\Gamma (a\Gamma S)\subseteq a\Gamma
(S\Gamma S)=a\Gamma S,$ which shows that $S\Gamma a$ and $a\Gamma S$ are
left $\Gamma $-ideals of $S$ so by Lemma \ref{LisR}, $S\Gamma a$ and $%
a\Gamma S$ are two-sided $\Gamma $-ideals of $S$.

Now%
\begin{eqnarray*}
&&S\Gamma (S\Gamma a\cap a\Gamma S)\cap (S\Gamma a\cap a\Gamma S)\Gamma S \\
&=&S\Gamma (S\Gamma a)\cap S\Gamma (a\Gamma S)\cap (S\Gamma a)\Gamma S\cap
(a\Gamma S)\Gamma S \\
&\subseteq &(S\Gamma a\cap a\Gamma S)\cap (S\Gamma a)\Gamma S\cap S\Gamma
a\subseteq S\Gamma a\cap a\Gamma S.
\end{eqnarray*}

Which implies that $S\Gamma a\cap a\Gamma S$ is a $\Gamma $-quasi ideal so
by Theorems \ref{12} and \ref{quo}, $S\Gamma a\cap a\Gamma S$ is a two-sided 
$\Gamma $-ideal.

Also since $a\in Q$, we have%
\begin{equation*}
S\Gamma a\cap a\Gamma S\subseteq S\Gamma Q\cap Q\Gamma S\subseteq Q\cap
Q\subseteq Q\text{.}
\end{equation*}

Now since $Q$ is minimal so $S\Gamma a\cap a\Gamma S=Q,$ where $S\Gamma a$
and $a\Gamma S$ are minimal two-sided $\Gamma $-ideals of $S$, because let $%
I $ be a $\Gamma $-ideal of $S$ such that $I\subseteq S\Gamma a,$ then%
\begin{equation*}
I\cap a\Gamma S\subseteq S\Gamma a\cap a\Gamma S\subseteq Q,
\end{equation*}%
which implies that%
\begin{equation*}
I\cap a\Gamma S=Q.\text{ Thus }Q\subseteq I.
\end{equation*}

So we have%
\begin{eqnarray*}
S\Gamma a &\subseteq &S\Gamma Q\subseteq S\Gamma I\subseteq I,\text{ gives}
\\
S\Gamma a &=&I.
\end{eqnarray*}

Thus $S\Gamma a$ is a minimal two-sided $\Gamma $-ideal of $S$. Similarly $%
a\Gamma S$ is a minimal two-sided $\Gamma $-ideal of $S.$

Conversely, let $Q=I\cap J$ be a two-sided $\Gamma $-ideal of $S$, where $I$
and $J$ are minimal two-sided $\Gamma $-ideals of $S,$ then by Theorem \ref%
{12} and \ref{quo}, $Q$ is a $\Gamma $-quasi ideal of $S$, that is $S\Gamma
Q\cap Q\Gamma S\subseteq Q.$

Let $Q^{^{\prime }}$ be a two-sided $\Gamma $-ideal of $S$ such that $%
Q^{^{\prime }}\subseteq Q$, then

\begin{eqnarray*}
S\Gamma Q^{^{\prime }}\cap Q^{^{\prime }}\Gamma S &\subseteq &S\Gamma Q\cap
Q\Gamma S\subseteq Q,\text{ also }S\Gamma Q^{^{\prime }}\subseteq S\Gamma
I\subseteq I\text{ } \\
\text{and }Q^{^{\prime }}\Gamma S &\subseteq &J\Gamma S\subseteq J\text{.}
\end{eqnarray*}

Now 
\begin{eqnarray*}
S\Gamma \left( S\Gamma Q^{^{\prime }}\right) &=&\left( S\Gamma S\right)
\Gamma \left( S\Gamma Q^{^{\prime }}\right) =\left( Q^{^{\prime }}\Gamma
S\right) \Gamma \left( S\Gamma S\right) \\
&=&\left( Q^{^{\prime }}\Gamma S\right) \Gamma S=\left( S\Gamma S\right)
\Gamma Q^{^{\prime }}=S\Gamma Q^{^{\prime }}
\end{eqnarray*}%
implies that $S\Gamma Q^{^{\prime }}$ is a left $\Gamma $-ideal and hence a
two-sided $\Gamma $-ideal by Lemma \ref{LisR}. Similarly $Q^{^{\prime
}}\Gamma S$ is a two-sided $\Gamma $-ideal of $S$.

But since $I$ and $J$ are minimal two-sided $\Gamma $-ideals of $S$, so

\begin{equation*}
S\Gamma Q^{^{\prime }}=I\text{ and }Q^{^{\prime }}\Gamma S=J.
\end{equation*}

But $Q=I\cap J,$ which implies that,

\begin{equation*}
Q=S\Gamma Q^{^{\prime }}\cap Q^{^{\prime }}\Gamma S\subseteq Q^{^{\prime }}.
\end{equation*}

Which give us $Q=Q^{^{\prime }}$. Hence $Q$ is minimal.
\end{proof}
\end{theorem}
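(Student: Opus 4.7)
The plan is to prove the two directions separately, using the structural results established earlier in the paper (Lemma \ref{LisR} and Theorems \ref{12}, \ref{quo}) as the main tools.

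For the forward direction, assume $Q$ is a minimal two-sided $\Gamma$-ideal and fix $a\in Q$. I would introduce the natural candidates $S\Gamma a$ and $a\Gamma S$. First I would verify that both are two-sided $\Gamma$-ideals: an easy computation shows each is a left $\Gamma$-ideal, and then Lemma \ref{LisR} upgrades ``left'' to ``two-sided'' in the intra-regular setting. Next I would show that the intersection $S\Gamma a\cap a\Gamma S$ is a $\Gamma$-quasi ideal by the routine check
\[
S\Gamma(S\Gamma a\cap a\Gamma S)\cap (S\Gamma a\cap a\Gamma S)\Gamma S\subseteq S\Gamma a\cap a\Gamma S,
\]
and then invoke Theorems \ref{12} and \ref{quo} to conclude it is a two-sided $\Gamma$-ideal. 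Since $a\in Q$ gives $S\Gamma a\cap a\Gamma S\subseteq S\Gamma Q\cap Q\Gamma S\subseteq Q$, minimality of $Q$ forces $Q=S\Gamma a\cap a\Gamma S$.

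The main obstacle is then establishing that $S\Gamma a$ and $a\Gamma S$ are themselves \emph{minimal} two-sided $\Gamma$-ideals, not merely two-sided. To handle $S\Gamma a$, I would take any two-sided $\Gamma$-ideal $I\subseteq S\Gamma a$ and intersect with $a\Gamma S$:
\[
I\cap a\Gamma S\subseteq S\Gamma a\cap a\Gamma S=Q.
\]
The delicate point is that $I\cap a\Gamma S$ is itself a two-sided $\Gamma$-ideal contained in $Q$, and by minimality of $Q$ it must equal $Q$ (provided the intersection is nonempty, which will follow because $Q\subseteq I\cap a\Gamma S$ after one more step). Then $Q\subseteq I$ gives $S\Gamma a\subseteq S\Gamma Q\subseteq S\Gamma I\subseteq I$, so $I=S\Gamma a$. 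The same argument handles $a\Gamma S$.

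For the converse, suppose $Q=I\cap J$ with $I,J$ minimal two-sided $\Gamma$-ideals. Theorems \ref{12} and \ref{quo} again let me treat $Q$ as a $\Gamma$-quasi ideal, so $S\Gamma Q\cap Q\Gamma S\subseteq Q$. Given any two-sided $\Gamma$-ideal $Q'\subseteq Q$, I would verify that $S\Gamma Q'$ and $Q'\Gamma S$ are themselves two-sided $\Gamma$-ideals (via the medial/paramedial calculation $S\Gamma(S\Gamma Q')=(Q'\Gamma S)\Gamma S=S\Gamma Q'$ together with Lemma \ref{LisR}), and that $S\Gamma Q'\subseteq I$ while $Q'\Gamma S\subseteq J$. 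Minimality of $I$ and $J$ then forces $S\Gamma Q'=I$ and $Q'\Gamma S=J$, so
\[
Q=I\cap J=S\Gamma Q'\cap Q'\Gamma S\subseteq Q',
\]
and hence $Q=Q'$, proving minimality. The only calculation that requires care is checking that $S\Gamma Q'$ really is a two-sided ideal and actually lies inside $I$ (using $Q'\subseteq Q\subseteq I$ and the ideal property of $I$); everything else is bookkeeping with the left invertive, medial, paramedial and AG$^{**}$ laws.
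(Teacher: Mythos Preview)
Your proposal is correct and follows essentially the same route as the paper: in both directions you use the same candidates ($S\Gamma a$, $a\Gamma S$ in the forward direction; $S\Gamma Q'$, $Q'\Gamma S$ in the converse), the same upgrading via Lemma \ref{LisR} and Theorems \ref{12}, \ref{quo}, and the same minimality squeeze argument. Your outline is in fact slightly more explicit than the paper about the nonemptiness issue in the step $I\cap a\Gamma S=Q$, which the paper passes over silently.
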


\end{document}